\crefname{equation}{}{} 
\numberwithin{equation}{section}
\newtheorem{theo}{Theorem}[section]
\newtheorem{lemm}[theo]{Lemma}
\theoremstyle{definition}
\newtheorem{defi}[theo]{Definition}
\newtheorem{rema}[theo]{Remark}
\newtheorem{assumption}[theo]{Assumption}
\newcommand{\C}{\mathbb C}
\newcommand{\D}{\mathbb D}
\newcommand{\bH}{\mathbb H}
\newcommand{\bN}{\mathbb N}
\newcommand{\R}{\mathbb R}
\newcommand{\Z}{\mathbb Z}
\newcommand{\od}{\mathrm{d}}
\newcommand{\az}{\alpha}
\newcommand{\vz}{\varphi}
\newcommand{\Oz}{\Omega}
\newcommand{\oz}{\omega}
\newcommand{\tz}{\theta}
\newcommand{\Tz}{\Theta}
\newcommand{\ez}{\epsilon}
\newcommand{\gz}{\gamma}
\newcommand{\Gz}{\Gamma}
\newcommand{\sz}{\sigma}
\newcommand{\bz}{\beta}
\newcommand{\dz}{\delta}
\newcommand{\wt}{\widetilde}
\newcommand{\pd}{\partial}
\DeclareMathOperator{\dist}{dist}
\DeclareMathOperator{\diam}{diam}
\newcommand{\fz}{\infty}
\title[Homeomorphic Sobolev extensions]{Homeomorphic Sobolev extensions and integrability of hyperbolic metric}
\author[X. Zhou]{Xilin Zhou}
\address{X. Zhou, Department of Mathematics and Statistics, University of Jyväskylä, P.O. Box 35 (MaD), FI-40014, Finland}
\email{xilin.j.zhou@jyu.fi}
\subjclass[2020]{Primary: 46E35 secondary: 30C62, 58E20.}
\keywords{Sobolev homeomorphism, Sobolev extension, hyperbolic metric}
\thanks{This research was supported by the
Finnish centre of excellence in Randomness and Structures of the Academy of Finland, funding decision number: 346305.}
\begin{document}


\begin{abstract}
    Very recently, it was proved that if the hyperbolic metric of a planar Jordan domain is $L^q$-integrable for some $q\in (1,\fz)$,
    then every homeomorphic parametrization of the boundary Jordan curve via the unit circle can be extended to a Sobolev homeomorphism of the entire disk.

    This naturally raises the question of whether the extension holds under more general integrability conditions on the hyperbolic metric.
    
    In this work, we examine the case where the hyperbolic metric is $\phi$-integrable.
    Under appropriate conditions on the function $\phi$,
    we establish the existence of a Sobolev homeomorphic extension for every homeomorphic parametrization of the Jordan curve.
    Moreover, we demonstrate the sharpness of our result by providing an explicit counterexample.
\end{abstract}
\maketitle


\section{Introduction}

Let $\D$ be the unit disk in the complex plane, and $\vz : \pd\D\to\C$ be a (homeomorphic) embedding.
Then $\Oz$, the interior domain bounded by the Jordan curve $\vz(\pd\D)$, is called a Jordan domain, and
$\vz$ is called a parametrization of the Jordan curve $\pd\Oz$.
By the Jordan--Schoenflies theorem, $\vz$ admits a homeomorphic extension from $\overline{\D}$ to $\overline{\Oz}$.
Notably, this theorem requires no assumptions on the regularity of $\vz$ or the boundary $\pd\Oz$.

In variational methods within Geometric Function Theory \cite{AIM2009,HK2014,IM2001,R1989} and Nonlinear Elasticity \cite{A1995,B1976,C1988},
Sobolev regularity of homeomorphisms plays a crucial role.
In Nonlinear Elasticity, the question of Sobolev homeomorphic extension arises in the context of boundary value problems for elastic deformations.
The deformations are minimizers, obtained by taking the infimum of a given elastic energy functional over the space of Sobolev homeomorphisms.
Consequently, a fundamental question is whether the class of admissible homeomorphisms is nonempty—that is, whether a homeomorphic $W^{1,p}$-extension exists.

It is important to note that such extensions may not always exist.
Verchota \cite{V2007} constructed a boundary homeomorphism $\vz:\pd\D\to\pd\D$ that cannot be extended to a Sobolev homeomorphism in $W^{1,2}$. 
Furthermore, Zhang \cite{Z2019} demonstrated the existence of a Jordan domain for which the boundary parametrization,
obtained by Carath\'eodory's theorem from the exterior Riemann map, does not admit a homeomorphic $W^{1,1}$ extension.

Thus, this calls for a geometric condition of the Jordan domain to
guarantee the existence of a homeomorphic $W^{1,p}$-extension for $p\in [1,2)$. Under proper geometric conditions, the problem is well-investigated:
If $\Oz$ is convex, Verchota \cite{V2007} showed that the harmonic extension of any boundary parametrization lies in the Sobolev space $W^{1,p}$, for any $p\in [1,2)$.
A.~Koski and J.~Onninen \cite{KO2021} considered the case when $\pd\Oz$ is rectifiable.
Moreover, P.~Koskela et al \cite{KKO2020} obtained an extension theorem when $\Oz$ is a John domain.

To establish a general geometric condition for the extension problem,
A.~Koski and J.~Onninen \cite{KO2023} derived a criterion based on a series of internal distances within the Jordan domain.
The Gehring--Hayman theorem \cite{GH1962} shows that the internal distance  of a pair of points in the Jordan domain is comparable to the length of the respective hyperbolic geodesic.
Inspired by the above results, O.~Bouchala et al. \cite{BJKXZ2024} obtained a relationship between Sobolev homeomorphic extendability and $L^q$-integrability of the hyperbolic metric $h_{\Oz}$.
\begin{theo}{\cite[Theorem 1.2]{BJKXZ2024}}\label{old_t1}
    Let $\Oz$ be a Jordan domain, $z_0\in\Oz$ and $q\in (1,\fz)$. Let us assume that
    \begin{equation}\label{previous-result}
        \int_{\Oz}\left(h_{\Oz}(z,z_0)\right)^q\,\od z < \fz.
    \end{equation}
    Then each homeomorphic parametrization $\vz:\pd\D\to\pd\Oz$ of our Jordan curve
    has a homeomorphic extension in the class $W^{1,p}(\D,\C)$ for all $p\in [1,2)$.
\end{theo}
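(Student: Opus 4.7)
The plan is to realize the extension as a piecewise affine map between two compatible dyadic triangulations --- one of $\overline{\D}$ built from the standard partition of $\partial\D$, and one of $\overline\Omega$ whose interior vertices lie along hyperbolic geodesics in $\Omega$. The $W^{1,p}$ bound then reduces to a dyadic sum that the hypothesis \eqref{previous-result} controls via the Gehring--Hayman theorem and H\"older's inequality. This structural approach follows the scheme of Koski--Onninen \cite{KO2023} and Koskela et al.\ \cite{KKO2020}; the present theorem amounts to verifying their internal-distance criterion under the single analytic condition that $h_\Omega(\cdot, z_0) \in L^q(\Omega)$.

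I set up the scaffolding first. At generation $n \geq 0$, partition $\partial\D$ into $2^n$ equal arcs and push forward by $\varphi$ to get boundary vertices $\zeta_{n,k} := \varphi(e^{2\pi ik/2^n})$ on $\partial\Omega$. For consecutive $\zeta_{n,k}, \zeta_{n,k+1}$ let $\gamma_{n,k}$ be the hyperbolic geodesic joining them and let $w_{n,k} \in \Omega$ be an interior vertex on $\gamma_{n,k}$, for instance its hyperbolic midpoint. Pairing these with explicit interior points in $\D$ at radii $1 - 2^{-n}$ yields triangulations $\mathcal T_\Omega$ and $\mathcal T_\D$ sharing the same dyadic combinatorics, and I define $f \colon \overline\D \to \overline\Omega$ as the unique vertex-preserving piecewise affine map between them. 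That $f$ is actually a homeomorphism --- i.e.\ corresponding triangles in $\mathcal T_\Omega$ are non-degenerate and consistently oriented --- follows from the tree structure of hyperbolic geodesics, as in \cite{KO2023, KKO2020}.

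Next I bound the Sobolev energy. On each dyadic triangle $T = T_{n,k} \subset \D$ (diameter $\asymp 2^{-n}$, area $\asymp 2^{-2n}$), the affinity of $f$ yields
$$
\int_T |Df|^p \, \od z \,\lesssim\, 2^{n(p-2)} \diam(f(T))^p,
$$
and Gehring--Hayman bounds $\diam(f(T))$ by the Euclidean length $\ell(\gamma_{n,k})$ of the corresponding hyperbolic geodesic. The proof therefore reduces to showing
$$
\sum_{n,k} 2^{n(p-2)} \ell(\gamma_{n,k})^p < \infty \qquad \text{for every } p \in [1,2).
$$
To extract this from \eqref{previous-result}, I surround each $\gamma_{n,k}$ by a Whitney-type tube $U_{n,k} \subset \Omega$ on which $h_\Omega(\cdot, z_0) \asymp n$; such tubes have bounded overlap, and H\"older's inequality with exponents $q$ and $q/(q-1)$ applied in $k$ (together with the pointwise identity $h_\Omega \asymp n$ on $U_{n,k}$) bounds $\sum_k \ell(\gamma_{n,k})^p$ by a negative power of $n$ times $\int_{\bigcup_k U_{n,k}} h_\Omega^q \, \od z$. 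Summing in $n$ then converts \eqref{previous-result} into the desired convergence.

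The main obstacle is calibrating this Whitney-tube / H\"older argument so that the dyadic sum closes for \emph{every} $p < 2$ whenever $q > 1$, with no matching between the two exponents. Because $q$ can be arbitrarily close to $1$ there is little slack, and the requisite decay in $n$ is delicate to extract. Constructing the tubes $U_{n,k}$ wide enough to capture sufficient hyperbolic mass while preserving bounded overlap is the key geometric step, relying on the quasi-self-similar geometry of hyperbolic geodesics in simply connected planar domains.
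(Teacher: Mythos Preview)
Your reduction to the dyadic sum $\sum_{n,k} 2^{n(p-2)}\ell(\gamma_{n,k})^p < \infty$ is correct and matches the paper's strategy (which invokes the Koski--Onninen criterion, Theorem~\ref{t0}, rather than building the piecewise affine map by hand, but this is cosmetic). The genuine gap is in the last step: your assertion that one can surround each $\gamma_{n,k}$ by a tube $U_{n,k}$ on which $h_\Omega(\cdot,z_0)\asymp n$ is false, and the argument cannot be repaired in this form. The index $n$ records only the dyadic scale on the \emph{source} circle $\partial\D$; after pushing forward by the arbitrary homeomorphism $\varphi$, the boundary points $\zeta_{n,k}$ carry no information about the conformal geometry of $\Omega$. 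For a given $n$, some pairs $\zeta_{n,k},\zeta_{n,k+1}$ may be conformally very close (so the geodesic $\gamma_{n,k}$ lives at enormous hyperbolic distance from $z_0$) and others conformally far apart (so the geodesic passes near $z_0$). There is no relation whatsoever between $n$ and $h_\Omega$ along $\gamma_{n,k}$.

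The paper's substitute for this step is Lemma~\ref{lemma:crosscut}: one fixes a single crosscut $\Gamma=f(\gamma)$, transfers $\gamma$ via a M\"obius map to the upper half-circle in $\bH^+$, and chops \emph{that} arc into Whitney pieces $C_m$, $m\in\Z\setminus\{0\}$. On the companion boxes $A_m$ one has $h_\Omega \gtrsim |m|\log 2$ intrinsically (from the conformal structure, independent of $\varphi$), and Koebe distortion plus Cauchy--Schwarz along the geodesic give
\[
\ell(\Gamma)^2 \;\lesssim\; \Bigl(\sum_{m\neq 0}\frac{1}{\phi(|m|\log 2)}\Bigr)\int_{\Delta}\phi\bigl(h_\Omega(z,z_0)\bigr)\,\od z,
\]
where $\Delta$ is the region enclosed by $\Gamma$ and the corresponding boundary arc. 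For $\phi(t)=t^q$, $q>1$, the series converges. The crucial feature you are missing is that the regions $\Delta_{n,j}$ are pairwise disjoint for fixed $n$, so $\sum_j \ell(\Gamma_{n,j})^2 \lesssim \int_\Omega h_\Omega^q$ uniformly in $n$; H\"older with exponent $2/p$ then yields $\sum_j \ell(\Gamma_{n,j})^p \lesssim 2^{n(1-p/2)}$, and the sum over $n$ of $2^{n(p-2)}\cdot 2^{n(1-p/2)}=2^{n(p/2-1)}$ converges. In short, the Whitney decomposition and the hyperbolic-distance lower bound must happen \emph{inside each geodesic}, not across the dyadic levels of $\varphi$.
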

Moreover, O.~Bouchala et al. showed the sharpness of the theorem.
\begin{theo}{\cite[Theorem 1.3]{BJKXZ2024}}\label{old_t2}
    There is homeomorphic parametrization $\vz:\pd\D\to\pd\Oz$ of a Jordan curve, so that 
    $\vz$ does not have a homeomorphic $W^{1,1}$-extension, even though 
    \eqref{previous-result} holds for $q = 1$ and some $z_0\in\Oz$.
\end{theo}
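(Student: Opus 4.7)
The plan is to construct an explicit counterexample. I would take $\Oz$ to be a ``rooms-and-passages'' type bounded Jordan domain obtained from a unit square $Q_0$ by attaching along one edge a countable sequence of appendages $A_n=C_n\cup R_n$, $n\in\N$. Each $C_n$ is a thin (possibly curved or spiralling, and placed inside a disjoint small box so as to fit in a bounded region) corridor of Euclidean width $w_n$ and arc-length $\ell_n$, joined to a small room $R_n$ of sides $a_n$ and $b_n$. The boundary parametrization $\vz:\pd\D\to\pd\Oz$ will send pairwise disjoint arcs $I_n\subset\pd\D$ of lengths $\lambda_n$ (with $\sum\lambda_n<2\pi$) homeomorphically onto $\pd A_n\cap\pd\Oz$, with the $\lambda_n$ chosen much smaller than the perimeter of $A_n$; the remaining part of $\pd\D$ is mapped homeomorphically onto $\pd Q_0$ minus the mouths of the corridors.

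To verify \eqref{previous-result} with $q=1$, I would use that the hyperbolic density inside a tube of width $w_n$ is comparable to $1/w_n$, so for $z\in A_n$ at arc length $t$ from the mouth of $C_n$ we have $h_\Oz(z,z_0)\approx t/w_n$, whence
\[
\int_{A_n}h_\Oz(z,z_0)\,\od z \;\approx\; \ell_n^{2}+\frac{a_nb_n\ell_n}{w_n}.
\]
The parameters $(\ell_n,w_n,a_n,b_n)$ are calibrated so that the sum of these quantities is finite. The tube asymptotic follows from conformal comparison of each $C_n$ with a straight model strip together with the Gehring--Hayman theorem recalled in the introduction, and the base contribution from $Q_0$ is trivially finite.

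For non-extendability, I would assume for contradiction that $f\in W^{1,1}(\D,\C)$ is a homeomorphic extension of $\vz$ and examine the preimages $V_n=f^{-1}(A_n)$, each an open topological disk attached to $\pd\D$ along $I_n$. The ACL property of $W^{1,1}$ maps ensures that for almost every $\theta\in I_n$ the image of the radial segment through $e^{i\theta}$ is rectifiable and, in order to exit $V_n$, must traverse $C_n$; hence its Euclidean length is at least $\ell_n-O(a_n+b_n)$. A Fubini-in-polar estimate then gives a lower bound of the form $\int_{V_n}|Df|\gtrsim \ell_n\lambda_n$, and summing yields $\int_\D|Df|\gtrsim\sum_n\ell_n\lambda_n$. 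The choice of $(\ell_n,\lambda_n)$ is then to be made so that this sum diverges, producing the contradiction $f\notin W^{1,1}(\D,\C)$.

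The delicate point — and what I expect to be the main obstacle — is the joint calibration of parameters. The $L^1$-integrability of $h_\Oz$ already forces roughly $\sum \ell_n^{2}<\fz$, and together with $\sum\lambda_n<\fz$ this renders the naive bound $\sum\ell_n\lambda_n$ automatically finite by Cauchy--Schwarz. Thus the argument cannot rely on the radial Fubini estimate alone; I would sharpen it either by combining the radial lower bound with transverse curve estimates across $C_n$ (exploiting the preimage of vertical cross-sections of the tube to pick up an extra factor), or by enriching each appendage with further small-scale features (for example chained smaller rooms) designed to produce an additional divergent contribution that survives the hyperbolic integrability. Once a working choice of parameters and a suitably enhanced lower bound are in hand, the contradiction is immediate and the theorem follows.
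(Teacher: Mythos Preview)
Your proposal correctly identifies its own fatal obstacle and does not resolve it. With separate appendages of corridor length $\ell_n$, the contribution to $\int_\Oz h_\Oz$ is indeed $\sim \sum \ell_n^2$, so integrability forces $\sum\ell_n^2<\fz$; together with $\sum \lambda_n < \fz$ this makes the radial lower bound $\sum \ell_n \lambda_n$ automatically finite, exactly as you note. Neither of your suggested remedies is concrete enough to overcome this: transverse cross-sections of $C_n$ contribute only terms of order $w_n$, and ``enriching'' each $A_n$ with further features does not escape the constraint that every boundary point of that appendage lies at internal distance at most $\ell_n + O(a_n+b_n)$ from its mouth, so the radial bound attached to $I_n$ can never exceed $\lambda_n\ell_n$ in order of magnitude.

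The paper does not prove this cited statement directly, but its Theorems~\ref{main-2} and~\ref{main-3} (specialised to $\phi(t)=t$) supply the missing idea. Instead of many short appendages one uses a \emph{single} appendage of infinite internal length: a tapering tube built from trapezoids $R_n$ of length and width both comparable to $a_n$, with $\sum a_n=\fz$ but $\sum n\,a_n^2<\fz$ (for instance $a_n\sim 1/(n\log n)$; the paper manufactures such sequences via Lemma~\ref{lemm:series}). The hyperbolic distance to $R_n$ is then comparable to $n$, giving $\int h_\Oz\lesssim\sum n\,a_n^2<\fz$, while the internal diameter is $\sum a_n=\fz$. The tube is then folded bi-Lipschitzly into a bounded Jordan domain. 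For non-extendability one exploits the infinite internal diameter as in Theorem~\ref{main-3}: place arcs $I_n\subset\pd\D$ of length $\pi/4^n$ whose $\vz$-images lie at internal distance $\geq 4^n$ from a fixed interior point --- all along the same long tube, at increasing depths --- and the radial Fubini estimate yields $\int_\D|Df|\gtrsim\sum 4^{-n}\cdot 4^n=\fz$. The exponential growth of internal distance is available only because the appendage is infinitely long; with your finite separate appendages the Cauchy--Schwarz barrier is genuine, not an artifact of the estimate.
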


It is now natural to inquire whether similar results
can be obtained for refined scales,
such as critical exponents $\az$ for the sufficiency of the
integrability of $h_{\Oz}\cdot\left(\log(e + h_{\Oz})\right)^{\az}$.
For this purpose, we investigate a more general integrability condition for the hyperbolic metric.
\begin{assumption}\label{assume:phi}
    Let $\phi:[0,\fz)\to[0,\fz)$ be a continuous function so that:
    \begin{itemize}
        \item[(1)] $\phi$ is strictly-increasing, and $\lim_{t\to\fz} \phi(t) = \fz$;
        \item[(2)] There is a constant $M > 0$ such that, for all $s,t\in [0,\fz)$,
        \begin{equation}\label{con0}
            \phi(s + t) \leq M\left(\phi(s) + \phi(t)\right);
        \end{equation}
    \end{itemize}
\end{assumption}
Under the above conditions, we prove the main theorem of this paper.
\begin{theo}\label{main-1}
    Let $\Oz$ be a Jordan domain, $z_0\in\Oz$ and suppose
    \begin{equation}\label{con1}
        \int_{\Oz}\phi(h_{\Oz}(z_0,z))\,\od z < \fz,
    \end{equation}
    where $\phi$ satisfies Assumption \ref{assume:phi}.
    If
    \begin{equation}\label{con2}
        \int^{\fz}_1 \frac{1}{\phi(s)}\,\od s < \fz,
    \end{equation}
    then each homeomorphic parametrization $\vz:\pd\D\to\pd\Oz$ of our Jordan curve
    has a homeomorphic extension in the class $W^{1,p}(\D,\C)$ for all $p\in [1,2)$.
\end{theo}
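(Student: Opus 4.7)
The plan is to follow the architecture of the proof of \cite[Theorem 1.2]{BJKXZ2024}, replacing only the $L^q$-scale summation by an Orlicz-scale one calibrated to the function $\phi$. The geometric construction of the extension itself is essentially unchanged. First, I would build a homeomorphic extension $f:\overline{\D}\to\overline{\Oz}$ of $\vz$ by the combinatorial/hyperbolic scheme of \cite{BJKXZ2024}: a Whitney decomposition $\{Q_i\}$ of $\D$ with Euclidean sizes $r_i$ is matched, via the boundary map and the hyperbolic geodesics inside $\Oz$, to cells of diameters $d_i$ in $\Oz$; on a compatible triangulation $f$ is piecewise affine and satisfies $|Df|\lesssim d_i/r_i$ on each $Q_i$.

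Second, discretize the Sobolev norm. One obtains
\[
\int_{\D}|Df|^p\,\od z\;\lesssim\;\sum_i d_i^p\,r_i^{2-p}.
\]
Invoking the Gehring--Hayman theorem to pass from internal diameters to hyperbolic ones, $d_i\lesssim r_i\,\bar h_i$, where $\bar h_i$ is a representative value of $h_{\Oz}(z_0,\cdot)$ on the image cell. The hypothesis \eqref{con1} then yields
\[
\sum_i r_i^2\,\phi(\bar h_i)\;\lesssim\;\int_{\Oz}\phi(h_\Oz(z_0,w))\,\od w<\fz,
\]
and the matter reduces to proving $\sum_i \bar h_i^{\,p}\,r_i^2<\fz$ for every $p\in[1,2)$ from this input together with $\sum_i r_i^2\le|\D|$ and the two conditions \eqref{con0} and \eqref{con2} on $\phi$.

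Third, for the Orlicz summability, split the index set at a threshold $K$: the low part $\{\bar h_i\le K\}$ contributes at most $K^p|\D|$, while the high part is treated by a layer-cake representation combined with the Chebyshev-type bound
\[
\sum_i r_i^2\,\chf_{\{\bar h_i>\lz\}}\;\le\;\frac{1}{\phi(\lz)}\sum_i r_i^2\,\phi(\bar h_i).
\]
The doubling condition \eqref{con0} is then used to reorganize the dyadic scales of $\bar h_i$ so that the resulting tail integral takes the form $\int^{\fz}1/\phi(\lz)\,\od\lz$, which is finite by \eqref{con2}.

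The main obstacle is the summability step. A naive Cavalieri/Chebyshev estimate produces the integral $\int_1^{\fz}\lz^{p-1}/\phi(\lz)\,\od\lz$, a strictly stronger requirement than the hypothesized $\int_1^{\fz}1/\phi(\lz)\,\od\lz<\fz$ as soon as $p>1$. Closing this gap is where \eqref{con0} and \eqref{con2} must work jointly, most naturally by grouping cells according to their hyperbolic—rather than Euclidean—scale, so that the polynomial weight $\lz^{p-1}$ is absorbed into the iterated doubling of $\phi$. Calibrating this absorption uniformly for all $p\in[1,2)$, while leaving no slack that would defeat the sharpness witnessed by the subsequent counterexample, is the delicate technical point.
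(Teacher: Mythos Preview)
Your architecture matches the paper's at the coarse level (dyadic decomposition of $\pd\D$, hyperbolic geodesics as crosscuts, the Koski--Onninen criterion), but the core analytic step diverges, and the divergence is exactly where your plan breaks.

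The estimate $d_i\lesssim r_i\,\bar h_i$ is not what Gehring--Hayman says and is not available in general: Gehring--Hayman compares the Euclidean length of a hyperbolic geodesic with the \emph{internal distance} between its endpoints, not with the hyperbolic distance $h_\Omega(z_0,\cdot)$. Without that pointwise bound you are left with the summability problem you yourself flag, namely deducing $\sum_i r_i^2\,\bar h_i^{\,p}<\infty$ from $\sum_i r_i^2\,\phi(\bar h_i)<\infty$. For $\phi(t)=t(\log(e+t))^{\alpha}$ with $\alpha>1$ (which satisfies \eqref{con2}) one has $\bar h^{\,p}/\phi(\bar h)\to\infty$ for every $p>1$, so no Chebyshev/layer-cake or doubling argument on $\phi$ can close this gap; condition \eqref{con0} only gives $\phi(\lambda)\lesssim \lambda^{\log_2 M}$, which goes the wrong way. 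The ``absorption of $\lambda^{p-1}$ into the iterated doubling'' you propose does not exist.

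The paper avoids this entirely by proving a different cell-level estimate. In Lemma~\ref{lemma:crosscut} the crosscut $\Gamma$ is decomposed into arcs $g(C_m)$ indexed by hyperbolic scale $m$, and Koebe distortion gives
$\ell(g(C_m))^2\lesssim \phi(|m|\log 2)^{-1}\int_{g(A_m)}\phi(h_\Omega)$.
A single application of Cauchy--Schwarz to $\sum_m \ell(g(C_m))$ then produces
\[
\ell(\Gamma)^2\ \lesssim\ \Big(\sum_m \frac{1}{\phi(|m|\log 2)}\Big)\int_{\Delta}\phi(h_\Omega(z,z_0))\,\od z,
\]
so that the hypothesis \eqref{con2} enters as a \emph{multiplicative constant}, not as a tail weight. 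After this, for each dyadic level $n$ one only needs H\"older with exponent $p/2<1$ and the disjointness of the regions $\Delta_{n,j}$ to get $\sum_j\ell(\Gamma_{n,j})^p\lesssim 2^{n(1-p/2)}\big(\int_\Omega\phi(h_\Omega)\big)^{p/2}$, and the sum over $n$ is a geometric series because $p<2$. The key idea you are missing is this Cauchy--Schwarz along the hyperbolic-scale decomposition of a single crosscut; it is what converts \eqref{con2} into the right inequality without ever confronting $\int\lambda^{p-1}/\phi(\lambda)\,\od\lambda$.
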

The condition \eqref{con2} in our theorem is sharp. This is shown in the next theorem.
\begin{theo}\label{main-2}
    Let the function $\phi$ satisfy Assumption \ref{assume:phi} and, suppose that
    \begin{equation}\label{con2-counter}
        \int^{\fz}_1 \frac{1}{\phi(s)}\,\od s = \fz.
    \end{equation}
    Suppose that there is a simply connected domain $G$ with $g_0\in G$ so that
    \begin{equation}\label{con2-assume}
        \int_G \phi(h_G(g_0,z))\,\od z < \fz.
    \end{equation}
    Then there exists a Jordan domain $\Oz$ with $z_0\in\Oz$ such that \eqref{con1} holds, while at the same time,
    there exists a homeomorphic parametrization $\vz:\pd\D\to\pd\Oz$ of the Jordan curve
    that does not have a homeomorphic $W^{1,1}$-extension.
\end{theo}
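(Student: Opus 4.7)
I follow the comb-domain strategy behind Theorem~\ref{old_t2} (see \cite[Theorem~1.3]{BJKXZ2024} and Zhang~\cite{Z2019}): build a Jordan domain $\Oz$ with countably many thin ``teeth'' accumulating at a boundary point, together with a parametrization $\vz$ whose failure to extend as a $W^{1,1}$-homeomorphism is detected by a length/oscillation estimate. The novelty compared to the case $\phi(t)=t$ is that both the base region and the teeth must now be modelled on the template domain $G$ from~\eqref{con2-assume}, and that the geometric parameters of the construction have to be tuned using~\eqref{con2-counter}.

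\textbf{Construction.} First pass from $G$ to a nested Jordan subdomain $G'\Subset G$ for which the integrability~\eqref{con2-assume} still holds; conformally normalise $G'$ so that it has unit diameter and a distinguished boundary point~$p_0$. Take the base $R_0$ to be a fixed (larger) conformal copy of $G'$ containing~$z_0$. Along an arc of $\pd R_0$ near $z_0$, attach a disjoint sequence of thin rectangular passages $P_n$ of widths $\ep_n\to 0$ with $\sum_n\ep_n<\fz$; each $P_n$ opens into a scaled copy $T_n$ of $G'$ of diameter $d_n$, glued at (the image of) $p_0$. This yields a Jordan domain $\Oz=R_0\cup\bigcup_n(P_n\cup T_n)$. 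Define $\vz\colon\pd\D\to\pd\Oz$ so that a sequence of disjoint short arcs $I_n\subset\pd\D$, accumulating at a single point, are mapped homeomorphically onto the $\pd(P_n\cup T_n)$-portions of $\pd\Oz$, arranged exactly as in the counterexample from~\cite{Z2019,BJKXZ2024}, so that any homeomorphic extension must cover each tooth $T_n$ entirely from the small ``half-disk'' attached to~$I_n$.

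\textbf{Verification.} For non-extendability, the standard ACL/Fubini length-oscillation estimate along curves crossing the half-disk attached to each $I_n$ gives, for any homeomorphic extension $f$,
\[
\int_{\D}|Df|\;\gtrsim\;\sum_n\diam(T_n)\;=\;\sum_n d_n,
\]
so divergence of $\sum_n d_n$ rules out a $W^{1,1}$-extension. For $\phi$-integrability, split $\int_\Oz\phi(h_\Oz(z_0,\cdot))=\int_{R_0}+\sum_n\int_{P_n}+\sum_n\int_{T_n}$. On $R_0$ use $h_\Oz\le h_{R_0}$ and~\eqref{con2-assume}. On a tooth, the doubling property~\eqref{con0}, the triangle inequality, and domain monotonicity $h_\Oz\le h_{T_n}$ give
\[
\int_{T_n}\!\phi(h_\Oz(z_0,z))\,\od z
\;\le\;M|T_n|\phi(D_n)+M\!\int_{T_n}\!\phi(h_{T_n}(e_n,z))\,\od z
\;\lesssim\;d_n^2\bigl(1+\phi(D_n)\bigr),
\]
where $e_n$ is the entry point of $T_n$ and $D_n=h_\Oz(z_0,e_n)$; the last bound uses conformal invariance of the hyperbolic metric together with~\eqref{con2-assume} applied to $G'$. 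The passage contributions are bounded analogously.

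\textbf{Parameter balance and main difficulty.} Tuning the construction reduces to choosing $d_n,\ep_n$ (and passage dimensions) with
\[
\sum_n\ep_n<\fz,\qquad \sum_n d_n=\fz,\qquad \sum_n d_n^2\phi(D_n)<\fz,
\]
where the passage geometry forces $D_n$ to be comparable to $\log(1/\ep_n)$ or another controlled quantity. Hypothesis~\eqref{con2-counter} is the Osgood-type input that makes this balance possible: by the integral test it implies $\sum_n 1/\phi(n)=\fz$, providing just enough slack to realise the divergence $\sum_n d_n=\fz$ simultaneously with the convergence $\sum_n d_n^2\phi(D_n)<\fz$ (e.g. by taking $\ep_n\asymp n^{-2}$, so $D_n\asymp\log n$, and $d_n$ decaying appropriately in $\phi(\log n)$). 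The main obstacle is this quantitative tuning—producing an explicit choice that works uniformly for every $\phi$ admissible under Assumption~\ref{assume:phi} and~\eqref{con2-counter}, and checking that the resulting $\Oz$ is genuinely a Jordan domain satisfying~\eqref{con1}; the topological obstruction and hyperbolic comparison estimates are then routine.
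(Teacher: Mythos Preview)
Your overall strategy---balance a divergent series against a convergent $\phi$-weighted one via the Osgood-type hypothesis~\eqref{con2-counter}, with an Abel--Dini argument supplying the sequence---is the right one and matches the paper's. But the comb construction you describe has two genuine gaps. First, you ask for pairwise disjoint teeth $T_n$ of Euclidean diameter $d_n$ with $\sum d_n=\infty$, each joined to a bounded base by a short passage (so that $D_n\asymp\log(1/\ep_n)$). Short passages force the teeth to sit immediately adjacent to the base, and disjoint planar sets of diameter $d_n$ laid out along a bounded arc cannot avoid overlapping or escaping to infinity when $\sum d_n=\infty$; one must either lengthen the passages drastically (destroying $D_n\asymp\log n$) or wind the whole structure up. The paper meets exactly this obstruction: it first builds an \emph{unbounded} domain $R$ as a half-disk followed by a chain of trapezoids $R_n$ of side $\asymp a_n$ (extending to infinity since $\sum a_n=\infty$), verifies $\phi$-integrability on $R$, and then carefully folds $R$ into a bounded Jordan domain using $\sum a_n^2<\infty$. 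This folding---which you call routine---is the main technical content of the proof.

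Second, the bound $\int_\D|Df|\gtrsim\sum_n d_n$ is not justified by the mechanism you cite. The ACL/radial-integration estimate gives $\int_\D|Df|\gtrsim\sum_n|I_n|\cdot L_n$, where $L_n$ is the \emph{internal} distance from a fixed interior point to the far part of $\varphi(I_n)$; since $\sum|I_n|<\infty$, divergence forces $L_n\to\infty$, i.e.\ $\diam_I(\Oz)=\infty$. Your half-disk variant (``$f$ must cover $T_n$ from the half-disk on $I_n$'') does not yield a per-tooth bound $\gtrsim d_n$, because nothing controls where $f$ sends the chord of that half-disk. The paper accordingly separates the two issues: Theorem~\ref{main-3} shows that $\diam_I(\Oz)=\infty$ alone already produces a parametrization with no $W^{1,1}$-homeomorphic extension, so the construction only needs to realise $\diam_I(\Oz)=\infty$ together with~\eqref{con1}. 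In the chained geometry the quasi-hyperbolic distance to the $n$-th piece grows linearly in $n$, so the balance becomes $\sum a_n=\infty$ versus $\sum a_n^2\phi(n)<\infty$, solved explicitly (Lemma~\ref{lemm:series}) by $a_n=\phi(n)^{-1}\bigl(\sum_{k\le n}\phi(k)^{-1}\bigr)^{-2/3}$.
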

To illustrate the results above, let us consider the functions $\phi_{\az} : [0,\fz)\to [0,\fz)$,
$$\phi_{\az}(t) := t \left(\log(e + t)\right)^{\az},$$
where $\az\in (0,\fz)$. Clearly, \eqref{con2-counter} holds precisely when $\az \in (0,1]$, and \eqref{con2} holds when $\az\in (1,\fz)$.
Furthermore, one may choose $G = \D$ in Theorem \ref{main-2}, see Lemma \ref{lemm:integrability} below.

Theorem \ref{main-2} relies on the following result
according to which even a $W^{1,1}$-extension may fail when the internal diameter $\diam_I$ of the Jordan domain is infinite.
\begin{theo}\label{main-3}
    Let $\Oz$ be a Jordan domain satisfying $\diam_I(\Oz) = \fz$.
    Then, there is a homeomorphic parametrization  $\vz: \pd\D \to \pd\Oz$ of the boundary Jordan curve
    so that $\varphi$ does not have a homeomorphic $W^{1,1}$-extension.
\end{theo}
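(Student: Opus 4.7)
My plan is to construct a parametrization $\vz: \pd\D \to \pd\Oz$ that places, onto arbitrarily small arcs of $\pd\D$, boundary arcs of $\pd\Oz$ consisting of points at arbitrarily large internal distance from $z_0$. Any homeomorphic $W^{1,1}$-extension $f$ would then have its radial restrictions absolutely continuous (by ACL) with Euclidean length bounded below by those internal distances, forcing $\int_\D |Df|\,\od z = \fz$.

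The most delicate step is geometric. Since $\Oz$ is bounded and $\diam_I(\Oz) = \fz$, I find interior points $x_k \in \Oz$ with $d_I(z_0, x_k) \to \fz$. The Euclidean-nearest boundary points $p_k \in \pd\Oz$ satisfy $d_I(x_k, p_k) \le \dist(x_k, \pd\Oz) \to 0$, since the Euclidean segment $[x_k, p_k]$ lies in $\Oz \cup \{p_k\}$; by the triangle inequality, $d_I(z_0, p_k) \to \fz$. Setting $d_n := 2^n$ and invoking the lower-semicontinuity of $d_I(z_0, \cdot)$ on $\overline{\Oz}$, after passing to a subsequence I select pairwise disjoint non-empty relatively open arcs $\tz_n \subset \pd\Oz$ with $d_I(z_0, p) \ge d_n$ for all $p \in \tz_n$. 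Then construct a cyclic-order-preserving homeomorphism $\vz: \pd\D \to \pd\Oz$ so that the preimages $I_n := \vz^{-1}(\tz_n)$ are disjoint arcs of $\pd\D$ with lengths $|I_n| = 2^{-n}$ (so $\sum_n |I_n| = 1 < 2\pi$).

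Suppose for contradiction that $f \in W^{1,1}(\D, \C)$ is a homeomorphic extension of $\vz$. For a.e.~$\theta$, $r \mapsto f(re^{i\theta})$ is absolutely continuous on $[\tfrac12, 1]$ with Euclidean length at least $d_I(f(e^{i\theta}/2), \vz(e^{i\theta}))$. For $e^{i\theta} \in I_n$ with $n$ large, the triangle inequality gives
\[
\int_{1/2}^1 |\pd_r f(re^{i\theta})|\,\od r \;\ge\; d_I(z_0, \vz(e^{i\theta})) - C_0 \;\ge\; d_n - C_0 \;\ge\; \tfrac{d_n}{2},
\]
where $C_0 := \sup_{|w|=1/2} d_I(z_0, f(w)) < \fz$ by compactness of $f(\{|w|=1/2\}) \subset \Oz$. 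Using $|Df| \ge |\pd_r f|$ and the polar Jacobian $\od z = r\,\od r\,\od\theta \ge \tfrac12\,\od r\,\od\theta$ on the sector $S_n := \{re^{i\theta} : r \in [\tfrac12, 1],\ e^{i\theta} \in I_n\}$,
\[
\int_{S_n} |Df|\,\od z \;\ge\; \tfrac12 \int_{I_n} \int_{1/2}^1 |\pd_r f(re^{i\theta})|\,\od r\,\od\theta \;\ge\; \tfrac{d_n |I_n|}{4} = \tfrac{1}{4}.
\]
Since the $S_n$ are pairwise disjoint, summing yields $\int_\D |Df|\,\od z \ge \sum_n \tfrac{1}{4} = \fz$, contradicting $f \in W^{1,1}(\D, \C)$.

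The main obstacle is the geometric step: ensuring that the annular level sets $\tz_n$ can be taken to be non-empty relatively open boundary arcs rather than merely isolated points. In Jordan domains where $d_I(z_0, \cdot)|_{\pd\Oz}$ may jump to $\fz$ only at isolated prime ends of infinite depth, one must refine by replacing strict level sets with small relatively open boundary neighborhoods of suitably deep interior points (projected to the boundary), and choose the sequence $d_n$ compatibly with the continuity set of $d_I(z_0, \cdot)$ on $\pd\Oz$, so that the lower-semicontinuity alone delivers positive-length arcs on which $d_I$ exceeds $d_n$.
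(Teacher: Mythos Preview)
Your overall strategy is the same as the paper's: produce boundary arcs on which $d_I(z_0,\cdot)$ is uniformly large, place them under short arcs $I_n\subset\partial\D$ with $\sum_n |I_n|\cdot d_n=\infty$, and use the radial ACL property to force $\int_\D|Df|=\infty$. The analytic endgame you wrote is essentially identical to the paper's.

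The gap is exactly where you flag it, and your proposed fix does not close it. Lower–semicontinuity of $d_I(z_0,\cdot)$ on $\overline\Oz$ with respect to the \emph{Euclidean} topology is not a free fact: when $\partial\Oz$ folds back on itself, points on different folds can be Euclidean–close while sitting at very different internal distances from $z_0$, and a naive Arzel\`a--Ascoli limit of short connecting curves may touch $\partial\Oz$. Your projected boundary points $p_k$ do have large finite $d_I(z_0,p_k)$ (that step is fine, and matches the paper's $\eta_n$ construction), but nothing you wrote guarantees an open arc around $p_k$ with the same property. The paper resolves this by passing through the Carath\'eodory extension $f:\overline\D\to\overline\Oz$: it pulls the points back to $\partial\D$, extracts a single accumulation point $\oz_0\in\partial\D$, and then for each chosen $\theta_k$ near $\oz_0$ walks along the internal geodesic from $f(0)$ to $f(\theta_k)$ to an interior point $\eta'$ with $d_I(f(0),\eta')$ nearly as large and $|\eta'-f(\theta_k)|$ tiny. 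Euclidean continuity of $f$ on $\partial\D$ then gives an arc $I(\theta_k,\delta_k)\subset\partial\D$ whose $f$–image lies in a small Euclidean ball about $f(\theta_k)$, and the triangle inequality for $d_I$ transfers the lower bound to every point of that image. This geodesic–plus–continuity manoeuvre is the missing ingredient in your sketch.

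A second point you underplay is the global construction of $\vz$. If the target arcs $\theta_n\subset\partial\Oz$ sit in an arbitrary configuration, the complement $\partial\Oz\setminus\bigcup_n\theta_n$ can be Cantor–like, and producing a circle homeomorphism sending it onto an arc of length $2\pi-1$ while prescribing $|I_n|=2^{-n}$ is not automatic. The paper avoids this by arranging, from the start, that the arcs $I(\theta_n,\delta_n)$ accumulate monotonically at the single point $\oz_0$; then $\vz$ is defined arc–by–arc by rescaling $f$, with a single well–defined limit at $\oz_0$.
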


\section{Sobolev homeomorphic extension, Theorem \ref{main-1}}

In this section, we first recall some fundamental definitions.
We then review the method introduced by A.~Koski and J.~Onninen.
As the main result of this section, we prove Theorem \ref{main-1}.

\medskip

Let $\Oz\subset\C$ be a Jordan domain. For $x,y\in\overline{\Oz}$, the internal distance $d_I(x,y)$ is defined as
$$d_I(x,y) = \inf_{\gz\in\Gz_{x,y}} \ell(\gz),$$
where $\Gz_{x,y}$ denotes the set of all rectifiable curves in $\Oz$ connecting $x$ and $y$, and $\ell(\gz)$ represents the length of the curve $\gz$.
The internal diameter of the domain $\Oz$, denoted by $\diam_I(\Oz)$, is given by
$$\diam_I(\Oz) = \sup_{x,y\in\pd\Oz} d_I(x,y).$$

\medskip

The arc length element for the hyperbolic metric of $\Oz$ is given at $z\in\Oz$ by
$$\od_{S_{hyp}} = \frac{2|g'(z)||\od z|}{1 - |g(z)|^2},$$
where $g$ is the local inverse of the universal covering map $f:\D\to\Oz$.
In our simply connected case, the map $f$ is the conformal homeomorphism obtained by the Riemann mapping theorem.
For $z_1,z_2\in\Oz$, the hyperbolic metric $h_{\Oz}(z_1,z_2)$ is defined by 
$$h_{\Oz}(z_1,z_2) = \inf_{\gz\in \Gz_{z_1,z_2}} \int_{\gz} \,\od_{S_{hyp}}.$$
The infimum is actually achieved by a hyperbolic geodesic. 

The hyperbolic metric is invariant under conformal mappings.
Moreover, let us consider the quasi-hyperbolic metric $k_{\Oz}(z_1,z_2)$ defined by
$$k_{\Oz}(z_1,z_2) = \inf_{\gz\in \Gz_{z_1,z_2}} \int_{\gz} \frac{1}{\dist(z,\pd\Oz)}\,|\od z|.$$
It holds that $k_{\Oz}(z_1,z_2)$ is comparable to $h_{\Oz}(z_1,z_2)$ for all $z_1,z_2\in\Oz$.
For more information on the hyperbolic metric, see \cite{AIM2009}.

\medskip

Next we turn to the method proposed by A.~Koski and J.~Onninen.
We begin by recalling the definition of a dyadic family of closed arcs in $\pd \D$.
Let $\bN := \{0, 1, 2, \dots\}$ and define $\bN^+ := \bN\setminus\{0\}$. 
\begin{defi}
	Let $n_0\in\bN^+$. A family
	$$I := \{I_{n,j}\subset\pd \D : n\geq n_0,\ j = 1,2,3,\ldots,2^n\}$$
    of closed arcs is called dyadic, if for every $n\geq n_0$,  arcs $I_{n,j}$, $j = 1,2,3,\ldots,2^n$, are
	\begin{enumerate}[(i)]
		\item of equal length,
		\item pairwise disjoint apart from their endpoints,
		\item cover $\pd \D$,
	\end{enumerate}
	and, for each arc $I_{n,j}$ there are two arcs in $I$ of half the length of $I_{n,j}$ 
	and so that their union is precisely $I_{n,j}$; 
	these arcs are called the children of $I_{n,j}$ and $I_{n,j}$ is their parent.
\end{defi}

\begin{theo}{\cite[Theorem 3.1]{KO2023}}\label{t0}
	Suppose that $\Omega$ is a Jordan domain and $\varphi\colon\pd \D \to \partial\Omega$ is a boundary homeomorphism.
	Suppose that for some $n_0\in\bN^+$ there is a dyadic family $I = \{I_{n,j}\}$ of closed arcs in $\pd \D$ such that the following hold:
	\begin{enumerate}[1.]
		\item For each $I_{n,j}$ with $n \geq n_0$ there exists a crosscut $\Gamma_{n,j}$ (i.e., a curve in $\Omega$)   connecting 
				   the endpoints of the boundary arc $\varphi(I_{n,j})\subset\partial\Omega$ and such that the estimate
				   \begin{equation}\label{e0}
					\sum^{\infty}_{n = n_0}2^{(p-2)n}\sum^{2^n}_{j = 1}\left(\ell(\Gamma_{n,j})\right)^p<\infty
				   \end{equation}
				   holds. Here $\ell$ stands for the Euclidean length. 
		\item The crosscuts $\Gamma_{n,j}$ for $n\geq n_0$ are all pairwise disjoint apart from their endpoints at the boundary, where $n$ and $j$ are allowed to range over all their possible values.
	\end{enumerate}
	Then $\varphi$ admits a homeomorphic extension from $\overline{\D}$ to $\overline{\Omega}$ in the class $W^{1,p}(\D, \C)$.
\end{theo}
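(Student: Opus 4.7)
The plan is to build the extension $f:\overline{\D}\to\overline{\Oz}$ explicitly by using the dyadic arc family to tile $\overline{\D}$ with flat Euclidean triangles, using the crosscut family to tile $\overline{\Oz}$ with curvilinear Jordan cells having the same combinatorial structure, and defining $f$ as a bi-Lipschitz map between corresponding cells whose $L^p$-norm of the derivative reproduces the summand in \eqref{e0}.

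I would first set up the two tilings. Let $C_{n,j}\subset\overline{\D}$ be the Euclidean chord joining the endpoints of $I_{n,j}$. For every $(n,j)$ with $n\ge n_0$, the three chords $C_{n,j}$, $C_{n+1,2j-1}$ and $C_{n+1,2j}$ enclose a flat Euclidean triangle $T_{n,j}$ whose vertices are the two endpoints and the midpoint of $I_{n,j}$; together with the central regular $2^{n_0}$-gon enclosed by $\{C_{n_0,k}\}_k$, the family $\{T_{n,j}\}_{n\ge n_0,\,j}$ tiles $\overline{\D}$, and trigonometry yields $\diam(T_{n,j})\lesssim 2^{-n}$, $|T_{n,j}|\asymp 2^{-2n}$ with uniformly bounded aspect ratio. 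On the target side, disjointness of the crosscuts (hypothesis 2) combined with the Jordan curve theorem implies that $\{\Gamma_{n,j}\}$ partitions $\Oz$ into a central Jordan cell enclosed by $\{\Gamma_{n_0,k}\}_k$ and, for each $n\ge n_0$, a curvilinear Jordan cell $S_{n,j}$ bounded by $\Gamma_{n,j}$, $\Gamma_{n+1,2j-1}$ and $\Gamma_{n+1,2j}$; in particular,
$$\diam(S_{n,j})\;\le\;\ell(\Gamma_{n,j})+\ell(\Gamma_{n+1,2j-1})+\ell(\Gamma_{n+1,2j}).$$

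Next I would construct $f$ piecewise. Fix once and for all a bi-Lipschitz homeomorphism between the two central cells respecting the $2^{n_0}$ boundary vertices prescribed by $\vz$. On each chord $C_{n,j}$ let $f$ be the constant-speed arclength parametrization of $\Gamma_{n,j}$; this is consistent at shared endpoints (on $\pd\D$, where $f=\vz$) and at the midpoint vertex (the common endpoint of the two children chords). To define $f$ inside $T_{n,j}$, pick an interior point $q_{n,j}\in S_{n,j}$, subdivide $S_{n,j}$ into three curvilinear sub-triangles by joining $q_{n,j}$ to the three vertices of $S_{n,j}$ via disjoint simple arcs, and subdivide $T_{n,j}$ into three flat sub-triangles by joining its centroid to its vertices. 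On each matching pair, take $f$ to be the standard ``corner straightening'' map: arclength-parametrize the two curvilinear sides of the target sub-triangle from the corresponding source edges and linearly interpolate in the transverse direction. This makes $f$ continuous across every cell interface by construction, bijective tile-by-tile, and (for a careful choice of $q_{n,j}$) bi-Lipschitz on each sub-triangle with constant comparable to $\diam(S_{n,j})/\diam(T_{n,j})$. Consequently, on each $T_{n,j}$,
$$\int_{T_{n,j}}|Df|^p\;\lesssim\;\left(\frac{\diam(S_{n,j})}{\diam(T_{n,j})}\right)^p|T_{n,j}|\;\lesssim\;2^{(p-2)n}\bigl[\ell(\Gamma_{n,j})^p+\ell(\Gamma_{n+1,2j-1})^p+\ell(\Gamma_{n+1,2j})^p\bigr],$$
and summing over $(n,j)$ — noting that each crosscut appears as parent or child of boundedly many tiles — gives $\int_\D|Df|^p<\fz$ by \eqref{e0}.

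The main obstacles I expect lie in the homeomorphism verification rather than in the $W^{1,p}$-bound. First, the interior points $q_{n,j}$ and the internal subdividing arcs inside each $S_{n,j}$ have to be chosen uniformly enough that the bi-Lipschitz constant of the corner-straightening map on every sub-triangle is dominated by $\diam(S_{n,j})/\diam(T_{n,j})$ with an absolute implicit constant; this forces one to rule out the curvilinear sub-triangles becoming too thin or too distorted, and is where the pairwise disjointness of the crosscut family really enters geometrically. Second and more delicate, continuity of $f$ at boundary points of $\pd\D$ requires that along every nested sequence $I_{n_k,j_k}$ shrinking to $\xi\in\pd\D$ the diameters $\diam(S_{n_k,j_k})$ vanish — a property that the summability \eqref{e0} does not force cell-by-cell when $p<2$, since individual terms $2^{(p-2)n}\ell(\Gamma_{n,j_n(\xi)})^p$ form a convergent geometric series even when the lengths $\ell(\Gamma_{n,j_n(\xi)})$ stay bounded away from zero. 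This is the crux, and I would address it by first replacing the given crosscut family with a refined one satisfying $\sup_j\ell(\Gamma_{n,j})\to 0$ as $n\to\infty$ (produced by a selection/truncation argument exploiting the disjointness of the $\Gamma_{n,j}$ and preserving \eqref{e0}), after which continuity on $\overline{\D}$ is automatic and the standard compact-to-Hausdorff bijection principle upgrades $f$ to a homeomorphism.
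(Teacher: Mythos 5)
This theorem is not proved in the paper at all: it is quoted verbatim as \cite[Theorem 3.1]{KO2023}, so the only fair comparison is with the Koski--Onninen argument, which is considerably more involved than your sketch precisely at the point where your sketch breaks. The decisive gap is the claim that each flat triangle $T_{n,j}$ can be mapped onto the curvilinear cell $S_{n,j}$ by a bi-Lipschitz homeomorphism with constant comparable to $\diam(S_{n,j})/\diam(T_{n,j})$. This is impossible in general: a homeomorphism of closed Jordan cells carries $\pd T_{n,j}$ onto $\pd S_{n,j}$, so an $L$-Lipschitz map forces $\ell(\pd S_{n,j})\leq L\,\ell(\pd T_{n,j})$, and since $\ell(\pd S_{n,j})$ can exceed $\diam(S_{n,j})$ by an arbitrarily large factor (the crosscuts are only assumed rectifiable, not flat), your constant is simply too small; moreover, even with the corrected constant $\sim 2^{n}\bigl(\ell(\Gamma_{n,j})+\ell(\Gamma_{n+1,2j-1})+\ell(\Gamma_{n+1,2j})\bigr)$ no bi-Lipschitz map need exist (the cell can fail to be quasiconvex, e.g.\ have cusp- or comb-like geometry), and the proposed ``arclength-parametrize the two sides and interpolate linearly in the transverse direction'' is not injective once the cell is non-convex, since the connecting segments can leave the cell or cross each other. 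Producing, for an arbitrary Jordan cell bounded by three rectifiable crosscuts, a homeomorphism from a model cell whose $p$-energy is bounded by $2^{(p-2)n}\ell(\Gamma_{n,j})^p$ is exactly the technical core of \cite{KO2021,KO2023}; it cannot be dispatched by a corner-straightening remark. (A smaller slip in the same step: the triangles $T_{n,j}$ are slivers with base $\sim 2^{-n}$ and height $\sim 2^{-2n}$, so $|T_{n,j}|\sim 2^{-3n}$ and the aspect ratio is unbounded, which already undermines the centroid-subdivision and the claimed uniform constants.)

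The second issue you flagged yourself, boundary continuity when the crosscut lengths along a nested sequence do not tend to zero, is real: for $p<2$ the hypothesis \eqref{e0} indeed tolerates a nested sequence of long ``hairpin'' crosscuts, and then no extension mapping dyadic cells onto the corresponding target cells can be continuous at the limiting boundary point, and the cells do not even tile $\Omega$. But the remedy ``refine the family by a selection/truncation argument'' is not a proof step: truncating a crosscut destroys the requirement that its endpoints be the endpoints of $\vz(I_{n,j})$, and \eqref{e0} by itself gives you nothing to select. The standard fix, and the one consistent with how this theorem is used in the present paper, is to replace each $\Gamma_{n,j}$ by the hyperbolic geodesic of $\Omega$ with the same endpoints: by the Gehring--Hayman theorem this increases lengths by at most a universal factor (so \eqref{e0} survives), the laminar structure of the dyadic arcs keeps the geodesics pairwise disjoint off the boundary, and Carath\'eodory's theorem (uniform continuity of the Riemann map on $\overline{\D}$) forces the diameters of the new cells to shrink, which is what continuity of the extension actually needs. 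As it stands, your proposal identifies the right combinatorial skeleton but leaves unproved exactly the two ingredients that make the theorem nontrivial.
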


To establish the main theorem,
we prove the following key lemma,
which generalizes \cite[Lemma 2.4]{BJKXZ2024}.

\begin{lemm}\label{lemma:crosscut}
	Suppose that $\Omega$ is a Jordan domain and $f\colon\overline{\D}\to\overline{\Omega}$ is a homeomorphism that is conformal on $\D$.
    Let us assume, further, that $\phi$ satisfies Assumption \ref{assume:phi} and \eqref{con1}.
 Let $\xi_1,\xi_2\in\pd \D$ be such that 
 $$|\xi_1 - \xi_2| \leq \frac{4 \pi}{1 + \pi^2} \approx 1.156,$$
 let $\gamma$ be the hyperbolic geodesic connecting $\xi_1$ and $\xi_2$,
 and $\wt{\gz}$ the shorter-length part of the boundary connecting $\xi_1$ and $\xi_2$.
 Then we have, for $\Gamma := f(\gamma)$ joining $f(\xi_1)$ and $f(\xi_2)$, that
 $$
 		\big(\ell(\Gamma)\big)^2\leq c\left(\int^{\fz}_{\frac{\log 2}{2}} \frac{1}{\phi(s)}\,\od s\right) \int_{\Delta} \phi\left(h_{\Omega}(z, f(0))\right)\od z,
 $$
 	where $\Delta$ is the region bounded by $\Gamma$ and $f(\wt{\gz})$, and $c$ is an absolute positive constant.
\end{lemm}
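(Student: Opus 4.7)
I adapt the proof of \cite[Lemma 2.4]{BJKXZ2024} (which handles the power scale $\phi(t)=t^q$) to a general $\phi$ by running Cauchy--Schwarz on the geodesic against a $\phi$-adapted weight.

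I first normalize the setup. Precomposing $f$ with a suitable conformal automorphism $M$ of $\D$, I may assume that $\gz$ is the real diameter $(-1,1)$, that $f^{-1}(\Delta)$ is the upper half-disk $\D^+$, and (after a further symmetric choice of $M$) that $w_0:=M^{-1}(0) = -ia$ for some $a\in(0,1)$. Writing $g:=f\circ M$, conformal invariance gives $h_\Omega(g(z),f(0)) = h_\D(z,w_0)$. Using $h_\D(0,\gz) = \log\cot(\tz/4)$ with $|\xi_1-\xi_2| = 2\sin(\tz/2)$, the assumption $|\xi_1-\xi_2|\le 4\pi/(1+\pi^2)$ yields the uniform lower bound $h_\D(x,w_0)\ge \tfrac{\log 2}{2}$ for all $x\in(-1,1)$; this is where the lower integration limit in the final inequality originates.

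For $x\in(-1,1)$, set $r(x):=(1-x^2)/4$ and $\wt z_x := x+ir(x)\in\D^+$. By Koebe distortion $|g'(x)|\asymp|g'(\wt z_x)|$, and subharmonicity of $|g'|^2$ on the Euclidean ball $B_x := B(\wt z_x, r(x)/2)\subset\D^+$ gives $|g'(x)|^2 \lesssim r(x)^{-2}\int_{B_x}|g'(w)|^2\,\od w$. Applying Cauchy--Schwarz to $\ell(\Gamma)=\int_{-1}^1|g'(x)|\,\od x$ with the weight $\alpha(x):=r(x)\phi(h_\D(x,w_0))$:
\[
\ell(\Gamma)^2 \le A\cdot B, \qquad A:=\int_{-1}^1\frac{\od x}{\alpha(x)}, \qquad B:= \int_{-1}^1 \alpha(x)|g'(x)|^2\,\od x.
\]
Changing variables $s=h_\D(x,w_0)$ on each half of $(-1,1)$ and using \eqref{con0} with $h\ge\tfrac{\log 2}{2}$, one obtains $A\lesssim\int_{\log 2/2}^\infty \od s/\phi(s)$. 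Inserting the pointwise subharmonic estimate into $B$ and swapping the order of integration (Fubini) gives $B\lesssim\int_{\D^+}|g'(w)|^2 \cdot I(w)\,\od w$ with $I(w):=\int_{\{x\,:\,w\in B_x\}}\phi(h_\D(x,w_0))/r(x)\,\od x$; since $w\in B_x$ forces $h_\D(x,w)=O(1)$, \eqref{con0} and the lower bound on $h$ give $\phi(h_\D(x,w_0))\asymp\phi(h_\D(w,w_0))$, while the inner $x$-interval has Euclidean length $\lesssim r(x)$, so $I(w)\lesssim\phi(h_\D(w,w_0))$. The conformal change of variables $z=g(w)$ then produces $B\lesssim\int_\Delta\phi(h_\Omega(z,f(0)))\,\od z$, and multiplying the estimates for $A$ and $B$ yields the lemma.

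\textbf{The main obstacle.} The most delicate point is the $A$-estimate near $x=0$, where $\od h/\od x$ vanishes and $\od x/\od s$ develops an integrable square-root singularity at $s=h_\D(0,w_0)$; this is absorbed into $\int_{\log 2/2}^\infty 1/\phi(s)\,\od s$ using $\phi(\log 2/2)>0$ and the doubling property \eqref{con0}. The remaining ingredients (Koebe distortion, sub-mean-value for $|g'|^2$, and the Whitney-type Fubini step) are standard.
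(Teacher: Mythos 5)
Your argument is correct in substance, but it takes a genuinely different route from the paper's. The paper keeps the setup of \cite[Lemma 2.4]{BJKXZ2024}: it maps $\D$ onto $\bH^+$ by a M\"obius transformation so that the geodesic becomes the unit semicircle, decomposes it into the dyadic arcs $C_m$ with adjacent Whitney boxes $A_m$, imports the three estimates \eqref{lemma:eq01}--\eqref{lemma:eq03} from that paper, and runs a discrete Cauchy--Schwarz over $m$; the only genuinely new work for general $\phi$ is the comparison $\sum_{m}1/\phi(|m|\log 2)\lesssim\int_{\log 2/2}^{\fz}\od s/\phi(s)$. You instead normalize the geodesic to the diameter $(-1,1)$ and run a continuous weighted Cauchy--Schwarz with weight $r(x)\phi(h_{\D}(x,w_0))$, re-deriving the distortion input yourself (Koebe distortion, sub-mean value of $|g'|^2$ on Whitney balls, Fubini). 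This is exactly the continuous counterpart of the paper's dyadic scheme: your $A$-term plays the role of $\sum_m 1/\phi(|m|\log 2)$ and your $B$-term the role of $\sum_m\int_{g(A_m)}\phi\left(h_{\Oz}(\cdot,f(0))\right)$. What your version buys is self-containedness (no appeal to \cite[(2.4)--(2.6)]{BJKXZ2024}) and no discretization; what the paper's buys is brevity and a constant that is truly absolute.

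Two caveats, neither fatal. First, as written your constant is not absolute: both the absorption of the near-minimum part of $A$ and the comparison $\phi(h_\D(x,w_0))\asymp\phi(h_\D(w,w_0))$ for $w\in B_x$ invoke \eqref{con0}, so your $c$ depends on the constant $M$ of Assumption \ref{assume:phi} (harmless for the application to Theorem \ref{main-1}, but weaker than the stated ``absolute'' constant; the paper's proof uses only monotonicity of $\phi$ at these points). The $A$-estimate can be made absolute: the chord condition actually gives $h_\D(\cdot,w_0)\ge\log\pi$ on $(-1,1)$ (since $\cot(\tz/4)\ge\pi$), and combined with $h_\D(x,w_0)\ge h_\D(0,x)$ (nearest-point projection onto the geodesic is distance-decreasing), monotonicity alone yields $A\lesssim\int_{\log 2/2}^{\fz}\od s/\phi(s)$, bypassing the square-root singularity you describe. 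Second, in the Fubini step you need the lower bound $h_\D(w,w_0)\gtrsim 1$ for all $w$ in the upper half of $\D$, not merely for points on the diameter, in order to absorb the additive $\phi(O(1))$ term coming from \eqref{con0} via Lemma \ref{lemm:quasilinear}; this does hold, because $w_0$ lies in the closed lower half of $\D$, so any geodesic from $w_0$ to such a $w$ crosses $(-1,1)$ and hence $h_\D(w,w_0)\ge h_\D(w_0,(-1,1))\ge\log\pi$, but it should be stated.
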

\begin{proof}
    We follow the same method as in the proof of \cite[Lemma 2.4]{BJKXZ2024}:

    Without loss of generality,
    we assume that the chosen points on the unit circle satisfy
    $\xi_1 = \overline{\xi_2}$, $\Re \xi_1 > 0$ and $\Im \xi_1 > 0$.
	Consider the M\"obius transformation $T$ mapping the unit disk onto the upper half-plane $\bH^+$,
	$$T(z) = a\,\frac{1-z}{1+z},$$
	where $a = \frac{1+\xi_1}{1-\xi_1}=\frac{i\Im\xi_1}{1-\Re\xi_1}$. 
    Then $T$ satisfies the following properties:
    \begin{itemize}
        \item[(1)] $T(\xi_1) = 1 = -T(\xi_2)$, $T(1) = 0$ and $T(-1) = \infty$;
        \item[(2)] $T(\D) = \bH^+$, and $T$ maps the hyperbolic geodesic $\gamma$ connecting $\xi_1$ and $\xi_2$ to the upper half of the unit circle, i.e., $\pd \D\cap \bH^+$. 
    \end{itemize}

    For all $m\in\bN^+$, define $\theta_m := \frac{\pi}{2^m}$, 
    and set 
    $$C_m := \{e^{i\theta}: \theta\in (\theta_{m+1},\theta_m)\}.$$
    For approximating purposes, define $R_m := 1 - \frac{1}{2^{m + 1}}$, $z_m := e^{i\theta_m}$,
    and let
	$$A_m := \left\{re^{i\theta}: r\in (R_m,1]\text{ and }\theta \in (\theta_{m+1},\theta_m)\right\}.$$
    To handle angles between $\frac{\pi}{2}$ and $\pi$, define
    $$A_{-m} := \left\{z\in\D: -\overline{z}\in A_m\right\}, \quad z_{-m} := -\overline{z_m},\quad C_{-m} = \{ z : -\overline{z} \in C_m \}.$$

\tikzset{every picture/.style={line width=0.75pt}} 

\begin{figure}[h]
    \centering
\begin{tikzpicture}[x=0.75pt,y=0.75pt,yscale=-0.73,xscale=0.73]

\draw   (274,171) .. controls (274,138.02) and (300.73,111.29) .. (333.71,111.29) .. controls (366.69,111.29) and (393.43,138.02) .. (393.43,171) .. controls (393.43,203.98) and (366.69,230.71) .. (333.71,230.71) .. controls (300.73,230.71) and (274,203.98) .. (274,171) -- cycle ;
\draw    (235.21,172) -- (430.21,170.02) ;
\draw [shift={(432.21,170)}, rotate = 179.42] [color={rgb, 255:red, 0; green, 0; blue, 0 }  ][line width=0.75]    (10.93,-3.29) .. controls (6.95,-1.4) and (3.31,-0.3) .. (0,0) .. controls (3.31,0.3) and (6.95,1.4) .. (10.93,3.29)   ;
\draw    (334.21,261) -- (333.23,83) ;
\draw [shift={(333.21,81)}, rotate = 89.68] [color={rgb, 255:red, 0; green, 0; blue, 0 }  ][line width=0.75]    (10.93,-3.29) .. controls (6.95,-1.4) and (3.31,-0.3) .. (0,0) .. controls (3.31,0.3) and (6.95,1.4) .. (10.93,3.29)   ;
\draw    (277.71,114.29) .. controls (243.43,85.83) and (216.63,87.51) .. (178.86,116.4) ;
\draw [shift={(177.71,117.29)}, rotate = 322.22] [color={rgb, 255:red, 0; green, 0; blue, 0 }  ][line width=0.75]    (10.93,-3.29) .. controls (6.95,-1.4) and (3.31,-0.3) .. (0,0) .. controls (3.31,0.3) and (6.95,1.4) .. (10.93,3.29)   ;
\draw    (396.71,108.29) .. controls (427.25,82.68) and (462.64,80.35) .. (494.54,104.86) ;
\draw [shift={(496,106)}, rotate = 218.54] [color={rgb, 255:red, 0; green, 0; blue, 0 }  ][line width=0.75]    (10.93,-3.29) .. controls (6.95,-1.4) and (3.31,-0.3) .. (0,0) .. controls (3.31,0.3) and (6.95,1.4) .. (10.93,3.29)   ;
\draw    (392.24,183.33) .. controls (377.09,178.54) and (380.09,158.54) .. (391.24,155.33) ;
\draw    (97.24,180) .. controls (114.24,180) and (136.24,188) .. (145.99,201.25) ;
\draw  [dash pattern={on 0.84pt off 2.51pt}]  (97.24,247) -- (162.24,186) ;
\draw    (162.24,186) -- (145.99,201.25) ;
\draw  [dash pattern={on 0.84pt off 2.51pt}]  (97.24,247) -- (181.09,213.54) ;
\draw    (154.11,193.63) .. controls (163.09,200.54) and (168.09,206.54) .. (171.09,217.54) ;
\draw    (181.09,213.54) -- (171.09,217.54) ;
\draw    (480.09,240.54) .. controls (520.09,210.54) and (580.09,255.54) .. (616,233) ;
\draw    (602.09,115.54) .. controls (636.09,87.54) and (647.09,213.54) .. (616,233) ;
\draw    (533.09,84.54) .. controls (571.09,53.54) and (568.09,141.54) .. (602.09,115.54) ;
\draw    (526.09,230.54) .. controls (538.09,187.54) and (584.09,150.54) .. (633.09,156.54) ;
\draw    (558.09,181.54) .. controls (564.09,194.54) and (574.09,185.54) .. (584.09,180.54) ;
\draw    (584.09,180.54) .. controls (597.09,180.54) and (587.09,171.54) .. (579.09,167.54) ;
\draw    (588.09,173.54) .. controls (599.09,167.54) and (587.09,173.54) .. (598.09,168.54) ;
\draw    (593.09,161.54) .. controls (616.09,161.54) and (599.09,166.54) .. (598.09,168.54) ;
\draw    (0.09,245.54) -- (216.09,244.55) ;
\draw [shift={(218.09,244.54)}, rotate = 179.74] [color={rgb, 255:red, 0; green, 0; blue, 0 }  ][line width=0.75]    (10.93,-3.29) .. controls (6.95,-1.4) and (3.31,-0.3) .. (0,0) .. controls (3.31,0.3) and (6.95,1.4) .. (10.93,3.29)   ;
\draw    (98.09,263.54) -- (98.09,11.54) ;
\draw [shift={(98.09,9.54)}, rotate = 90] [color={rgb, 255:red, 0; green, 0; blue, 0 }  ][line width=0.75]    (10.93,-3.29) .. controls (6.95,-1.4) and (3.31,-0.3) .. (0,0) .. controls (3.31,0.3) and (6.95,1.4) .. (10.93,3.29)   ;
\draw    (9.09,245.54) .. controls (19.09,124.54) and (181.09,134.54) .. (187.09,245.54) ;
\draw    (576.09,203.54) -- (574.25,180.54) ;
\draw [shift={(574.09,178.54)}, rotate = 85.43] [color={rgb, 255:red, 0; green, 0; blue, 0 }  ][line width=0.75]    (10.93,-3.29) .. controls (6.95,-1.4) and (3.31,-0.3) .. (0,0) .. controls (3.31,0.3) and (6.95,1.4) .. (10.93,3.29)   ;
\draw    (606.09,185.54) -- (592.37,169.08) ;
\draw [shift={(591.09,167.54)}, rotate = 50.19] [color={rgb, 255:red, 0; green, 0; blue, 0 }  ][line width=0.75]    (10.93,-3.29) .. controls (6.95,-1.4) and (3.31,-0.3) .. (0,0) .. controls (3.31,0.3) and (6.95,1.4) .. (10.93,3.29)   ;
\draw    (114.09,192.54) -- (127.62,179.91) ;
\draw [shift={(129.09,178.54)}, rotate = 136.97] [color={rgb, 255:red, 0; green, 0; blue, 0 }  ][line width=0.75]    (10.93,-3.29) .. controls (6.95,-1.4) and (3.31,-0.3) .. (0,0) .. controls (3.31,0.3) and (6.95,1.4) .. (10.93,3.29)   ;
\draw    (151.09,207.54) -- (166.2,202.21) ;
\draw [shift={(168.09,201.54)}, rotate = 160.56] [color={rgb, 255:red, 0; green, 0; blue, 0 }  ][line width=0.75]    (10.93,-3.29) .. controls (6.95,-1.4) and (3.31,-0.3) .. (0,0) .. controls (3.31,0.3) and (6.95,1.4) .. (10.93,3.29)   ;
\draw  [dash pattern={on 0.84pt off 2.51pt}]  (177.24,220.86) .. controls (181.24,237.86) and (179.24,223.86) .. (181.24,240.86) ;
\draw  [dash pattern={on 0.84pt off 2.51pt}]  (611.09,162) .. controls (627.09,162) and (624.09,161) .. (630.09,162) ;

\draw (444,63) node [anchor=north west][inner sep=0.75pt]   [align=left] {$f$};
\draw (224,67) node [anchor=north west][inner sep=0.75pt]   [align=left] {$T$};
\draw (394,140) node [anchor=north west][inner sep=0.75pt]   [align=left] {$\xi_1$};
\draw (394.24,186.33) node [anchor=north west][inner sep=0.75pt]   [align=left] {$\xi_2$};
\draw (75,141) node [anchor=north west][inner sep=0.75pt]  [font=\scriptsize] [align=left] {$z_1$};
\draw (166,168) node [anchor=north west][inner sep=0.75pt]  [font=\scriptsize] [align=left] {$z_2$};
\draw (107,198) node [anchor=north west][inner sep=0.75pt]  [font=\tiny] [align=left] {$A_1$};
\draw (65,28) node [anchor=north west][inner sep=0.75pt]  [font=\scriptsize] [align=left] {$T(0)$};
\draw (143,210) node [anchor=north west][inner sep=0.75pt]  [font=\tiny] [align=left] {$A_2$};
\draw (191,211) node [anchor=north west][inner sep=0.75pt]  [font=\scriptsize] [align=left] {$z_3$};
\draw (70,271) node [anchor=north west][inner sep=0.75pt]   [align=left] {$\bH^+$};
\draw (330,269.97) node [anchor=north west][inner sep=0.75pt]   [align=left] {$\D$};
\draw (536,264.97) node [anchor=north west][inner sep=0.75pt]   [align=left] {part of $\Oz$};
\draw (516,241) node [anchor=north west][inner sep=0.75pt]  [font=\scriptsize] [align=left] {$g(-1)$};
\draw (633,135) node [anchor=north west][inner sep=0.75pt]  [font=\scriptsize] [align=left] {$g(1)$};
\draw (530,170) node [anchor=north west][inner sep=0.75pt]  [font=\tiny] [align=left] {$g(z_1)$};
\draw (550,155) node [anchor=north west][inner sep=0.75pt]  [font=\tiny] [align=left] {$g(z_2)$};
\draw (566,205) node [anchor=north west][inner sep=0.75pt]  [font=\tiny] [align=left] {$g(A_1)$};
\draw (2.09,248.54) node [anchor=north west][inner sep=0.75pt]  [font=\scriptsize] [align=left] {$-1$};
\draw (184,246.54) node [anchor=north west][inner sep=0.75pt]  [font=\scriptsize] [align=left] {$1$};
\draw (590,185) node [anchor=north west][inner sep=0.75pt]  [font=\tiny] [align=left] {$g(A_2)$};
\draw (577,145) node [anchor=north west][inner sep=0.75pt]  [font=\tiny] [align=left] {$g(z_3)$};

\end{tikzpicture}
\caption{Transformation between domains}
\end{figure}
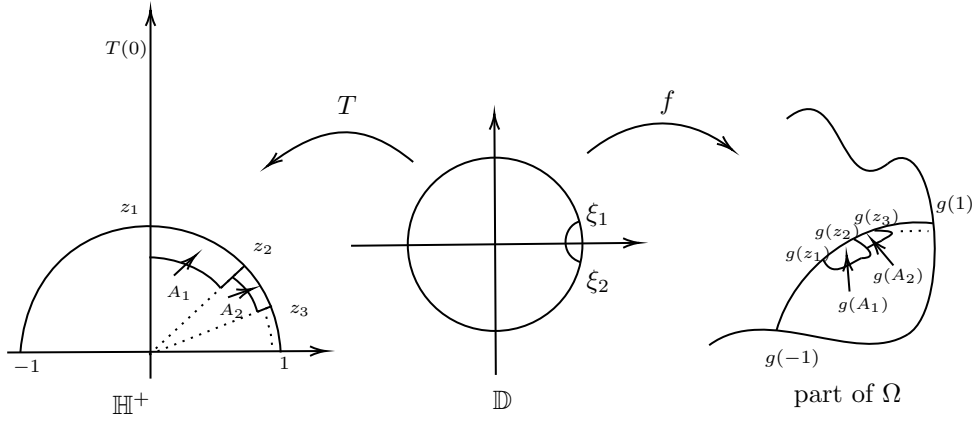

    \medskip

    We set $g := f\circ T^{-1}$.
    By \cite[(2.4), (2.6), and the inequality right after (2.6)]{BJKXZ2024},
    the following results hold: For all $m\in\Z\setminus\{0\}$ and $\oz\in A_m$,
        \begin{equation}\label{lemma:eq01}
            h_{\bH^+}(\omega, T(0)) \geq |m|\log 2,
        \end{equation}
        \begin{equation}\label{lemma:eq02}
            \ell(g(C_m)) \leq \frac{c_1}{2^{|m|}}|g'(z_m)|,
        \end{equation}
        \begin{equation}\label{lemma:eq03}
            \left|g(A_m)\right| \geq \frac{c_2}{2^{2|m|}}|g'(z_m)|^2,
        \end{equation}
    where $c_1$ and $c_2$ are positive constants independent of $m$.
    Then, by \eqref{lemma:eq01}, \eqref{lemma:eq03},
    along with the invariance of the hyperbolic metric under conformal mappings and Assumption \ref{assume:phi}, 
    we conclude that
    \begin{align*}
        \int_{g(A_m)} \phi\left(h_{\Oz}(\oz,f(0))\right)\,\od \oz
        & = \int_{g(A_m)} \phi\left(h_{\bH^+}(g^{-1}(\oz),T(0))\right)\,\od z\\
        & \geq \left|g(A_m)\right| \phi\left(|m|\log 2\right)\\
        & \geq \frac{c_2\phi\left(|m|\log 2\right)}{2^{2|m|}}|g'(z_m)|^2.
    \end{align*}
    Combining this with \eqref{lemma:eq02} and \eqref{con1}, it follows that
    \begin{align*}
        \left(\ell(g(C_m))\right)^2 \leq \frac{(c_1)^2}{c_2}\frac{1}{\phi\left(|m|\log 2\right)}\int_{g(A_m)} \phi\left(h_{\Oz}(\oz,f(0))\right)\,\od \oz.
    \end{align*}
    Thus, by the Cauchy--Schwartz inequality,
    \begin{align*}
        \ell(\Gz) 
        & = \sum_{m\in\Z\setminus\{0\}} \ell(g(C_m))\\
        & \lesssim \sum_{m\in\Z\setminus\{0\}} \left(\frac{1}{\phi\left(|m|\log 2\right)}\int_{g(A_m)} \phi\left(h_{\Oz}(\oz,f(0))\right)\,\od\oz\right)^{1/2}\\
        & \lesssim \left(\sum_{m\in\Z\setminus\{0\}} \frac{1}{\phi\left(|m|\log 2\right)}\right)^{1/2} \left(\sum_{m\in\Z\setminus\{0\}} \int_{g(A_m)} \phi\left(h_{\Oz}(\oz,f(0))\right)\,\od\oz\right)^{1/2}\\
        & \lesssim \left(\sum_{m\in\Z\setminus\{0\}} \frac{1}{\phi\left(|m|\log 2\right)}\right)^{1/2} \left(\int_{\Delta} \phi\left(h_{\Oz}(\oz,f(0))\right)\,\od\oz\right)^{1/2}.
    \end{align*}
    The last inequality follows from the fact that $\{g(A_m)\}_{m\in\Z\setminus\{0\}}$ is pairwise disjoint
    and that $g(A_m)\subset \Delta$.
    Furthermore, by Assumption \ref{assume:phi}, we obtain
    \begin{align*}
        \sum_{m\in\Z\setminus\{0\}} \frac{1}{\phi\left(|m|\log 2\right)}
        & = 2 \sum_{m\in\bN^+}\frac{1}{\phi\left(|m|\log 2\right)}\\
        & \lesssim \frac{1}{\phi(\log 2)} + \sum^{\fz}_{m = 2} \int^{m}_{m - 1} \frac{1}{\phi\left(x\log 2\right)}\,\od x\\
        & \lesssim \int^1_{1/2} \frac{1}{\phi(x\log 2)}\,\od x + \int^{\fz}_1 \frac{1}{\phi(x\log 2)}\,\od x\\
        & \lesssim \int^{\fz}_{\frac{\log 2}{2}}\frac{1}{\phi(x)}\,\od x < \fz.
    \end{align*}
    This completes the proof of Lemma \ref{lemma:crosscut}.
\end{proof}

\smallskip

\begin{rema}
    The estimation in Lemma \ref{lemma:crosscut} is not optimal,
    providing only a uniform upper bound for $\ell(\Gamma)$.
    The result does not offer any insight into the sharpness of the estimate itself or the exponent $\frac{\log 2}{2}$.
\end{rema}

\smallskip

Lemma \ref{lemma:crosscut} leads to a stronger result.
\begin{defi}
	Let $k\in\bN$. A sequence of points $\{x_0,x_1,\ldots,x_k\}\subset\pd \D$ is called a cycle if there exist
	$0 < \theta_1 <\ldots<\theta_k<2\pi$ such that $x_j = e^{i\theta_j}x_0$ for $j=1,...,k$.
    Specifically, set $x_{k + 1} = x_0$.
\end{defi}
\begin{lemm}\label{lemma:internal diameter}
    Let $\Oz$, $f$ and $\phi$ be defined as in Lemma \ref{lemma:crosscut}.
    Given all $k\in\bN^+$ and a cycle $\{x_0,x_1,\ldots,x_k\}\subset\pd \D$ satisfying
    \begin{equation}\label{equ:sizecondition}
        \max\{|x_i - x_{i + 1}|: i = 0,1,\ldots,k\} \leq \frac{4 \pi}{1 + \pi^2},
    \end{equation}
    let $\gamma_i$ be the hyperbolic geodesic connecting $x_i$ and $x_{i + 1}$.
    Then we have, for $\Gamma_i := f(\gamma_i)$ joining $f(x_i)$ and $f(x_{i + 1})$, that
    $$
 		\sum^k_{i = 0}\big(\ell(\Gamma_i)\big)^2\leq c\left(\int^{\fz}_{\frac{\log 2}{2}} \frac{1}{\phi(s)}\,\od s\right) \int_{\Oz} \phi\left(h_{\Omega}(z, f(0))\right)\od z < \fz,
    $$
    where $c$ is a positive constant.
    Furthermore,
    $$\diam_I(\Oz) < \fz.$$
\end{lemm}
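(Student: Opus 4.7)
The plan is to apply Lemma \ref{lemma:crosscut} to each consecutive pair $(x_i,x_{i+1})$ of the cycle and then sum the resulting bounds, the crucial geometric input being that the lens-shaped regions produced by the lemma sit inside $\Omega$ in a pairwise disjoint manner. Concretely, for each $i \in \{0,1,\ldots,k\}$ the size condition \eqref{equ:sizecondition} is exactly the hypothesis of Lemma \ref{lemma:crosscut} applied to the pair $(x_i,x_{i+1})$, yielding
$$\big(\ell(\Gamma_i)\big)^2 \leq c\left(\int_{\frac{\log 2}{2}}^{\fz}\frac{1}{\phi(s)}\,\od s\right)\int_{\Delta_i}\phi\left(h_{\Omega}(z,f(0))\right)\,\od z,$$
where $\Delta_i$ is bounded by $\Gamma_i$ and the image $f(\tilde{\gamma}_i)$ of the shorter boundary arc between $x_i$ and $x_{i+1}$.

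The key observation is that $\{\Delta_i\}_{i=0}^{k}$ are pairwise disjoint subsets of $\Omega$. Indeed, since $x_0,\ldots,x_k$ are cyclically ordered on $\pd\D$ and every chord $|x_i-x_{i+1}|$ is strictly smaller than the diameter of $\D$ by \eqref{equ:sizecondition}, the shorter arcs $\tilde{\gamma}_i$ partition $\pd\D$ up to their endpoints. Correspondingly, the preimages $f^{-1}(\Delta_i)\subset\D$ are crescent-shaped regions bounded between the non-crossing hyperbolic geodesics $\gamma_i$ (distinct geodesics joining cyclically ordered boundary points of $\D$ do not cross) and the pairwise disjoint boundary arcs $\tilde{\gamma}_i$. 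Summing the estimate above over $i$ and using $\bigsqcup_i \Delta_i \subset \Omega$, together with \eqref{con1} and \eqref{con2}, then yields the claimed finite upper bound on $\sum_i(\ell(\Gamma_i))^2$.

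For the internal-diameter statement, the idea is to apply the summed estimate to cycles of \emph{uniformly bounded} cardinality. Fix $\xi_0\in\pd\D$ and take the base $8$-point cycle $\{e^{i\pi j/4}\xi_0:j=0,\ldots,7\}$, whose consecutive chord distances all equal $2\sin(\pi/8) < \frac{4\pi}{1+\pi^2}$. Given an arbitrary $\xi\in\pd\D$, insert $\xi$ between the pair of adjacent base points whose arc contains it; inserting a point on an arc cannot increase the chord distance to either endpoint, so the resulting $9$-point cycle still satisfies \eqref{equ:sizecondition}. The summed estimate then gives $\sum_i(\ell(\Gamma_i))^2 \leq C$ with $C$ independent of $\xi$, and Cauchy--Schwarz converts this into $\sum_i\ell(\Gamma_i)\leq 3\sqrt{C}$; concatenating the appropriate crosscuts yields a curve in $\overline{\Omega}$ from $f(\xi_0)$ to $f(\xi)$ of length at most $3\sqrt{C}$, whence $d_I(f(\xi_0),f(\xi)) \leq 3\sqrt{C}$ and therefore $\diam_I(\Omega)\leq 6\sqrt{C} < \fz$. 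The main technical point to verify carefully is the pairwise disjointness of the $\Delta_i$'s; beyond that, the proof is just Lemma \ref{lemma:crosscut} summed term by term, plus a Cauchy--Schwarz step to pass from the $\ell^2$ estimate to the $\ell^1$ bound needed for internal distance.
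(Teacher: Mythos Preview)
Your proposal is correct and follows essentially the same approach as the paper: apply Lemma \ref{lemma:crosscut} to each consecutive pair, use the pairwise disjointness of the regions $\Delta_i$ to sum the integrals over $\Omega$, and for the diameter claim build a cycle of uniformly bounded cardinality through any prescribed boundary points and apply Cauchy--Schwarz. Your write-up is in fact somewhat more explicit than the paper's (which compresses the summation step to one sentence and invokes Gehring--Hayman in the diameter step, although concatenated crosscuts already furnish an admissible curve for the internal distance after a trivial perturbation off the finitely many boundary touches).
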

\begin{proof}
    Noticing that $\{\Gamma_i\}^k_{i = 0}$ is pairwise disjoint,
    the estimate for $\sum^k_{i = 0}\big(\ell(\Gamma_i)\big)^2$ follows directly from Lemma \ref{lemma:crosscut}.

    It is clear that, there exists a fixed constant $K\in\bN$ such that,
    for all $x,y\in\pd\Oz$, we can find a cycle $\{x_i\}^K_{i = 0}$ satisfying \eqref{equ:sizecondition}, and
    $f(x_l) = x$, $f(x_m) = y$ for some $l,m\in\{0,1,\ldots,K\}$ with $l\leq m$.
    Then, by the Gehring--Hayman inequality and the Cauchy--Schwartz inequality, we obtain
    \begin{align*}
        (d_I(x,y))^2 
        & \leq \left(\sum^m_{i = l} \ell(\Gamma_i)\right)^2\\
        & \leq K\sum^K_{i = 0} \left(\ell(\Gamma_i)\right)^2\\
        & \leq cK\left(\int^{\fz}_{\frac{\log 2}{2}} \frac{1}{\phi(s)}\,\od s\right) \int_{\Oz} \phi\left(h_{\Omega}(z, f(0))\right)\od z < \fz.
    \end{align*}
    This completes the proof of Lemma \ref{lemma:internal diameter}
\end{proof}

\smallskip

We finish this section by proving our main theorem.
\begin{proof}[Proof of Theorem \ref{main-1}]
For $n\in\bN^+$, and let $\Tz_n := \{\tz_{n,j}\}_{j = 0,\ldots,2^n - 1}$ be a cycle on the unit circle satisfying the conditions
\begin{itemize}
    \item[(1)] $\tz_{n,0} = 1$;
    \item[(2)] $\tz_{n,j} = e^{i\frac{2\pi j}{2^n}}\tz_{n,0}$ for all $j = 0,\ldots,2^n - 1$.
\end{itemize}

Let $f:\D\to\Oz$ be the conformal homeomorphism obtained by the Riemann mapping theorem.
By Carath\'eodory's theorem, $f$ extends continuously to a boundary parametrization $f:\pd\D\to\pd\Oz$.
Notice that $\{\Tz_n\}_{n\in\bN^+}$ forms a dense subset of $\pd\D$, and $\Tz_n \subset\Tz_m$ if $n\leq m$.
Thus, there is $n_0\in\bN^+$ such that, for all $n\geq n_0$, 
$$\left|f^{-1}(\vz(\tz_{n,j + 1})) - f^{-1}(\vz(\tz_{n,j}))\right|\leq \frac{4\pi}{1 + \pi^2},$$
for all $j = 0,\ldots,2^n - 1$.

Consider the crosscut $\Gz_{n,j} = f(\gz_{n,j})$ connecting $\vz(\tz_{n,j})$ and $\vz(\tz_{n,j + 1})$,
where $\gz_{n,j}$ is the hyperbolic geodesic connecting $\xi_{n,j} := f^{-1}(\vz(\tz_{n,j}))$ and $\xi_{n, j + 1} := f^{-1}(\vz(\tz_{n,j + 1}))$.
By Lemma \ref{lemma:crosscut}, for all $p\in [1,2)$ and $n\geq n_0$, we obtain
\begin{align*}
    2^{n(p - 2)}\sum^{2^n}_{j = 1} \left(\ell(\Gz_{n,j})\right)^p
    &\lesssim 2^{n(p - 2)}\sum^{2^n}_{j = 1}\left(\int_{\Delta_{n,j}}\phi(h_{\Oz}(z_0,z))\,\od z\right)^{p/2}\\
    &\lesssim 2^{n(p - 2)}2^{n(1 - p/2)} \left(\sum^{2^n}_{j = 1}\int_{\Delta_{n,j}}\phi(h_{\Oz}(z_0,z))\,\od z\right)^{p/2}\\
    &\lesssim 2^{n(p/2 - 1)} \left(\int_{\Oz}\phi(h_{\Oz}(z_0,z))\,\od z\right)^{p/2},
\end{align*}
where $\Delta_{n,j}$ is the region defined in Lemma \ref{lemma:crosscut}
with respect to the points $\xi_{n,j}$ and $\xi_{n,j + 1}$. 
Then, by \eqref{con1}, for all $p \in [1,2)$ we have
$$\sum^{\fz}_{n = n_0} 2^{n(p - 2)}\sum^{2^n}_{j = 1} \left(\ell(\Gz_{n,j})\right)^p \lesssim \left(\sum^{\fz}_{n = n_0}2^{n(p/2 - 1)}\right)\left(\int_{\Oz}\phi(h_{\Oz}(z_0,z))\,\od z\right)^{p/2} < \fz$$
By Theorem \ref{t0}, the boundary parametrization $\vz$
admits a homeomorphic extension from $\overline{\D}$ to $\overline{\Oz}$ in the class $W^{1,p}(\D,\C)$.
This completes the proof of Thenrem \ref{main-1}.
\end{proof}




\section{Counterexample, Theorem \ref{main-2} and Theorem \ref{main-3}}
In this section, we prove Theorem \ref{main-2} and Theorem \ref{main-3}.
By Lemma \ref{lemma:internal diameter}, it follows that
the conditions in Theorem \ref{main-1} imply that the Jordan domain has a finite internal diameter.
We first prove the case when $\diam_I(\Oz) = \fz$ (Theorem \ref{main-3}).
After that, we prove Theorem \ref{main-2} by constructing an explicit example of a Jordan domain with infinite internal diameter.
\begin{proof}[Proof of Theorem \ref{main-3}]
    Given $\oz\in \pd\D$ and $\dz\in [0,\pi]$ define a closed interval on the circle $\pd\D$ by setting
    $$I(\oz,\dz) := \{\oz e^{i\az}: \az\in [-\dz,\dz]\}.$$

    By the Riemann mapping theorem and Carath\'eodory's theorem,
    let $f:\overline{\D}\to\overline{\Oz}$ be the conformal homeomorphism.
    We first show that if $\diam_I(\Oz) = \fz$, 
    then there exists $\oz_0\in\pd\D$ such that, for all $\dz\in (0,\pi)$,
    \begin{equation}\label{equ:limit:point}
        \sup_{\oz\in I(\oz_0,\dz)\setminus\{\oz_0\}} d_I(f(0),f(\oz)) = \fz.
    \end{equation}

    In fact, if $d_I(f(0),f(\oz)) < \fz$ for all $\oz\in\pd\D$,
    there is a sequence $\{\oz_n\}_{n\in\bN^+}\subset\pd\D$ satisfying
    $$d_I(f(0),f(\oz_n)) \geq n, \quad \text{with }\oz_j\neq \oz_k\text{ for }j\neq k.$$
    By the compactness of $\pd\D$,
    there exists $\oz_0\in\pd\D$ such that \eqref{equ:limit:point} holds.

    Otherwise, there is $\wt{\oz}\in\pd\D$ such that $d_I(f(0),f(\wt{\oz})) = \fz$.
    Let $\gz \subset\Oz$ be the internal geodesic connecting $f(0)$ and $f(\wt{\oz})$.
    Then, there exists a sequence $\{x_n\}_{n\in\bN^+} \subset \gz$
    satisfying 
    $$d_I(f(0),x_n) = (3n + 1) \left(\diam(\Oz)\vee 1\right) < \fz$$ 
    for any $n\in\bN^+$. Choose
    $$\eta_n \in \overline{B(x_n,\dist(x_n,\pd\Oz))} \cap \pd\Oz.$$
    By triangle inequality, we have
    \begin{equation}\label{est:di:upper}
        d_I(f(0),\eta_n) \leq d_I(f(0),x_n) + d(x_n,\pd\Oz) < (3n + 2) \left(\diam(\Oz)\vee 1\right) < \fz,
    \end{equation}
    and
    \begin{equation}\label{est:di:lower}
        d_I(f(0),\eta_n) \geq d_I(f(0),x_n) - \dist(x_n,\pd\Oz) \geq 3n\left(\diam(\Oz)\vee 1\right).       
    \end{equation}
    By \eqref{est:di:upper} and \eqref{est:di:lower}, we have $\eta_j\neq\eta_k$ if $j\neq k$. 
    Considering the sequence 
    $\{\oz_n\}_{n\in\bN^+} := \{f^{-1}(\eta_n)\}_{n\in\bN^+}$,
    the compactness of $\pd\D$ ensures the existence of $\oz_0\in\pd\D$ satisfying \eqref{equ:limit:point}.

    \medskip

\tikzset{every picture/.style={line width=0.75pt}} 

\begin{figure}[h]
    \centering
\begin{tikzpicture}[x=0.75pt,y=0.75pt,yscale=-0.9,xscale=0.9]

\draw    (193.52,172.29) .. controls (188.52,193.29) and (195.52,203.29) .. (221.52,205.29) ;
\draw    (75.52,122.29) .. controls (115.52,92.29) and (224.52,87.29) .. (175.52,122.29) ;
\draw    (193.52,172.29) .. controls (195.52,135.29) and (135.52,152.29) .. (175.52,122.29) ;
\draw    (121,231) .. controls (148.52,199.29) and (218.52,257.29) .. (221,231) ;
\draw    (221.52,205.29) .. controls (261.52,175.29) and (216.52,233.29) .. (256.52,203.29) ;
\draw    (221,231) .. controls (224.52,200.29) and (263.52,245.29) .. (262.52,208.29) ;
\draw    (226.52,215.29) .. controls (235.52,215.29) and (246.52,221.29) .. (256.52,209.29) ;
\draw    (82.52,184.29) -- (183.58,206.04) -- (205.36,210.73) -- (226.52,215.29) ;
\draw  [dash pattern={on 0.84pt off 2.51pt}] (129.52,196.79) .. controls (129.52,182.98) and (140.72,171.79) .. (154.52,171.79) .. controls (168.33,171.79) and (179.52,182.98) .. (179.52,196.79) .. controls (179.52,210.59) and (168.33,221.79) .. (154.52,221.79) .. controls (140.72,221.79) and (129.52,210.59) .. (129.52,196.79) -- cycle ;
\draw  [dash pattern={on 0.84pt off 2.51pt}] (183.56,208.38) .. controls (183.56,202.36) and (188.44,197.47) .. (194.47,197.47) .. controls (200.49,197.47) and (205.38,202.36) .. (205.38,208.38) .. controls (205.38,214.41) and (200.49,219.29) .. (194.47,219.29) .. controls (188.44,219.29) and (183.56,214.41) .. (183.56,208.38) -- cycle ;
\draw  [dash pattern={on 0.84pt off 2.51pt}] (217.96,214.29) .. controls (217.96,210.66) and (220.9,207.72) .. (224.52,207.72) .. controls (228.15,207.72) and (231.09,210.66) .. (231.09,214.29) .. controls (231.09,217.91) and (228.15,220.85) .. (224.52,220.85) .. controls (220.9,220.85) and (217.96,217.91) .. (217.96,214.29) -- cycle ;
\draw    (478.52,168.29) .. controls (473.52,189.29) and (480.52,199.29) .. (506.52,201.29) ;
\draw    (360.52,118.29) .. controls (400.52,88.29) and (509.52,83.29) .. (460.52,118.29) ;
\draw    (478.52,168.29) .. controls (480.52,131.29) and (420.52,148.29) .. (460.52,118.29) ;
\draw    (406,227) .. controls (433.52,195.29) and (503.52,253.29) .. (506,227) ;
\draw    (506.52,201.29) .. controls (546.52,171.29) and (501.52,229.29) .. (541.52,199.29) ;
\draw    (506,227) .. controls (509.52,196.29) and (548.52,241.29) .. (547.52,204.29) ;
\draw [line width=1.5]    (409.99,222.79) .. controls (417.62,218.27) and (423.12,216.77) .. (434.99,216.79) ;
\draw    (409.37,220.52) -- (411.87,224.77) ;
\draw    (435.12,214.52) -- (434.62,219.77) ;
\draw    (375.52,180.29) .. controls (418.04,182.61) and (494.29,205.36) .. (501.29,233.36) ;
\draw  [dash pattern={on 0.84pt off 2.51pt}] (484.39,233.36) .. controls (484.39,224.03) and (491.96,216.46) .. (501.29,216.46) .. controls (510.62,216.46) and (518.18,224.03) .. (518.18,233.36) .. controls (518.18,242.69) and (510.62,250.25) .. (501.29,250.25) .. controls (491.96,250.25) and (484.39,242.69) .. (484.39,233.36) -- cycle ;
\draw [line width=1.5]    (488.39,232.18) .. controls (490.82,232.82) and (493.32,234.07) .. (498.07,233.57) ;
\draw    (488.61,230.04) -- (487.87,234.02) ;
\draw    (497.61,231.79) -- (497.62,235.52) ;

\draw (261,195) node [anchor=north west][inner sep=0.75pt]   [align=left] {$\cdots$};
\draw (45,170) node [anchor=north west][inner sep=0.75pt]   [align=left] {{\footnotesize $f(0)$}};
\draw (277,180) node [anchor=north west][inner sep=0.75pt]  [font=\footnotesize] [align=left] {$f(\wt{\oz})$};
\draw (147,226) node [anchor=north west][inner sep=0.75pt]  [font=\scriptsize] [align=left] {{\footnotesize $\eta_n$}};
\draw (201,186) node [anchor=north west][inner sep=0.75pt]  [font=\scriptsize] [align=left] {{\footnotesize $\eta_{n + 1}$}};
\draw (226.52,223.85) node [anchor=north west][inner sep=0.75pt]  [font=\scriptsize] [align=left] {{\footnotesize $\eta_{n + 2}$}};
\draw (396.38,201.69) node [anchor=north west][inner sep=0.75pt]  [font=\scriptsize] [align=left] {$f(I(\tz_n,\dz_n))$};
\draw (338.36,164.5) node [anchor=north west][inner sep=0.75pt]   [align=left] {{\footnotesize $f(0)$}};
\draw (455,237.69) node [anchor=north west][inner sep=0.75pt]  [font=\tiny] [align=left] {$f(I(\tz_{n + 1},\dz_{n + 1}))$};
\draw (552.5,178.75) node [anchor=north west][inner sep=0.75pt]  [font=\footnotesize] [align=left] {$f(\oz_0)$};

\end{tikzpicture}
\caption{Construction of $\{\eta_n\}$ and $\{I(\tz_n,\dz_n)\}$}
\end{figure}

    \medskip

    Let $\oz_0$ be defined as in \eqref{equ:limit:point}.
    Without loss of generality, we assume that $\oz_0 = e^{i\pi}$.
    We construct the sequence of intervals $\{I(\tz_n,\dz_n)\}_{n\in\bN^+}$ satisfying the following properties
    \begin{itemize}
        \item[(1)] The intervals $\{I(\tz_n,\dz_n)\}_{n\in\bN^+}$ are pairwise disjoint, and $\oz_0\notin I(\tz_n,\dz_n)$;
        \item[(2)] For all $n\in\bN^+$, $\tz_n = e^{it_n}$ where $0 < t_1 < t_2 < \ldots < \pi$, and
        $$\lim_{n\to\fz} t_n = \pi;$$
        \item[(3)] For all $n\in\bN^+$, it holds that
        \begin{equation}\label{equ:interval:lowerbound}
            \inf_{\oz\in I(\tz_n,\dz_n)} d_I(f(0),f(\oz)) \geq 4^n.
        \end{equation}
    \end{itemize}
    The construction of these intervals proceeds in two steps:

    \smallskip

    \textbf{Step 1.}
    By \eqref{equ:limit:point}, we choose $t_1\in (0,\pi)$ such that 
    $$\tz_1 = e^{it_1}\in \pd\D \setminus \{\oz_0\}\quad\text{and}\quad d_I(f(0),f(\tz_1)) \geq 8.$$
    Let $\gz_1$ be the internal geodesic connecting $f(0)$ and $f(\tz_1)$.
    There exists $r_1\in (0,10^{-6})$ and $\eta'_1\in \gz_1\cap \pd B(f(\tz_1),r_1)$ such that
    $$d_I(f(0),\eta'_1) \geq 8 - 10^{-5}.$$
    By the continuity of $f$, there is $\dz_1 \in (0,\fz)$ such that
    $$0 < t_1 - \dz_1 < t_1 + \dz_1 < \pi,$$
    and
    $$f(I(\oz_1,\dz_1)) \subset \pd\Oz \cap B\left(f(\oz_1),r_1\right),$$
    and it is easy to see $\oz_0\notin I(\oz_1,\dz_1)$.
    By triangle inequality, for all $\oz\in I(\oz_1,\dz_1)$, we further conclude that
    $$d_I(f(0),f(\oz)) \geq d_I(f(0),\eta'_1) - 2\pi r_1 \geq 4.$$

    \smallskip

    \textbf{Step 2.}
    Suppose we have already constructed the intervals
    $\{I(\tz_n,\dz_n)\}^k_{n = 1}\subset \pd\D$ satisfying:
    \begin{itemize}
        \item[1.] They are pairwise disjoint and $\oz_0\notin \bigcup^k_{n = 1}I(\tz_n,\dz_n)$;
        \item[2.] There is a sequence $0 < t_1 < \ldots < t_k < t_k + \dz_k < \pi$ such that $\tz_n = e^{it_n}$,
        $$\pi - \frac{\pi}{2^{n - 1}} < t_n - \dz_n < t_n + \dz_n < \pi,$$
        for all $n\in\{1,2,\ldots,k\}$; 
        \item[3.] For all $n\in \{1,2,\ldots,k\}$, \eqref{equ:interval:lowerbound} holds.
    \end{itemize}
    Choose $t_{k + 1}\in (t_k + \dz_k, \pi)\cap (\pi - \frac{\pi}{2^{k}},\pi)$, and $\tz_{k + 1} = e^{it_{k + 1}}$ such that
    $$\tz_{k + 1}\in \pd\D \setminus \left(\{\oz_0\}\cup\bigcup^k_{n = 1} I(\tz_n,\dz_n)\right),$$
    and
    \begin{equation}\label{equ:point:lowerbound}
        d_I(f(0),f(\tz_{k + 1})) \geq 2\times4^{k + 1}.
    \end{equation}
    Since $\oz_0$ is the interior point of $\pd\D\setminus\bigcup^k_{i = 1} I(\tz_i,\dz_i)$,
    the existence of $\tz_{k + 1}$ follows from \eqref{equ:limit:point}.
    By \eqref{equ:point:lowerbound} and using the similar method as in \textbf{Step I},
    for a sufficiently small $r_{k + 1}\in (0,10^{-6})$,
    there exists $\dz_{k + 1} \in (0,\fz)$ such that
    $$\max\left\{\pi - \frac{\pi}{2^k}, t_k + \dz_k\right\} < t_{k + 1} - \dz_{k + 1} < t_{k + 1} + \dz_{k + 1} < \pi,$$ 
    and
    $$f(I(\tz_{k + 1},\dz_{k + 1})) \subset \pd\Oz \cap B(f(\tz_{k + 1}),r_{k + 1}) \setminus f\left(\bigcup^k_{n = 1}I(\tz_n,\dz_n)\right).$$
    Moreover, for all $\oz\in I(\tz_{k + 1},\dz_{k + 1})$, we have
    $$d_I(f(0),f(\oz)) \geq 4^{k + 1}.$$
    It is easy to check that $I(\tz_{k + 1},\dz_{k + 1})$ satisfies all the properties.
    By induction, we obtain the desired sequence of intervals $\{I(\tz_n,\dz_n)\}_{n\in\bN^+}$.

    \smallskip

    Based on these intervals, we construct the boundary parametrization $\vz:\pd\D\to\pd\Oz$ as follows.
    The definition of $\vz$ is given in three cases

    \textbf{Case I.} 
    Set 
    $$\vz(e^{i\pi}) = f(e^{i\pi}).$$

    \textbf{Case II.} 
    For any $n\in\bN^+$, set 
    $$A_n := \left\{e^{i(\az + \bz)}: \az = \pi - \frac{\pi}{2^n}, \bz \in \left[0,\frac{\pi}{4^n}\right]\right\}.$$
    Consider the homeomorphism $g_n: A_n \to I(\tz_n,\dz_n)$ with constant speed (rescaling),
    with
    $$g_n\left(e^{i(\pi - \frac{\pi}{2^n})}\right) = \tz_{n}e^{-i\dz_{n}}\quad\text{and}\quad g_n\left(e^{i(\pi - \frac{\pi}{2^n} + \frac{\pi}{4^n})}\right) = \tz_{n}e^{i\dz_{n}}.$$
    Then, on each $A_n$, define $\vz := f\circ g_n$.

    \textbf{Case III.} 
    It is easy to see that 
    $$\pd\D\setminus \left(\{e^{i\pi}\}\cup\bigcup_{n\in\bN^+}A_n\right)$$ 
    is a countable union of disjoint open intervals on $\pd\D$.
    Define $\vz$ on each interval by rescaling (Similarly as in Case II).
    This completes the construction of $\vz$.

    It is obvious that $\vz$ is injective.
    Notice that $\vz$ is continuous at $e^{i\pi}$,
    and thus the mapping $\vz$ is a boundary parametrization.

    If $\vz$ admits a homeomorphism extension $\varPhi\in W^{1,1}_{loc}$, then
    for some $\ez\in (0,1)$ such that $\D(\varPhi(0),\ez)\subset \Oz$,
    by the definition of internal distance and \eqref{equ:interval:lowerbound},
    it holds that for all $n\in\bN^+$ and $\wt{\oz}\in\pd \D(\varPhi(0),\ez)$
    $$\inf_{\oz\in A_n} d_I(\wt{\oz},\varPhi(\oz)) \gtrsim 4^n,$$
    where the equivalence constant relies on $\ez$.
    We choose $\eta \in (0,\fz)$ such that 
    $$\D(0,\eta)\subset \varPhi^{-1} \left(\D(\varPhi(0),\ez)\right).$$
    Then
    \begin{align*}
        \int_{\D} |D\varPhi(z)|\,\od z
        & \geq \int^{2\pi}_0 \int_{\gz_{t}} |D\varPhi(z)||z|\,\od z\,\od t\\
        & \geq \eta\sum_{n\in\bN^+} \int_{e^{it} \in A_n}\int_{\gz_{t}} |D\varPhi(z)|\,\od z\,\od t\\
        & \gtrsim \eta\sum_{n\in\bN^+} \frac{\pi}{4^n} 4^n \gtrsim \eta\sum_{n\in\bN^+} 1 = \fz,
    \end{align*}
    where $\gz_t$ is the line segement connecting $\eta e^{it}$ and $e^{it}$.
    We conclude that $\varPhi\notin W^{1,1}(\D,\C)$. This completes the proof of Theorem \ref{main-3}.
\end{proof}
\medskip

At the end of this section, we prove Theorem \ref{main-2}.
By Theorem \ref{main-3}, it suffices to construct a Jordan domain which satisfies all the conditions in Theorem \ref{main-2},
and has infinite internal diameter. 
We begin with a technical lemma.
\begin{lemm}\label{lemm:quasilinear}
Let $\phi$ be a function satisfying Assumption \ref{assume:phi}.
Then, for any $\az\in (0,\fz)$, there is a constant $c_{\az,M} \in (0,\fz)$ which depends on $\az$ and $M$, where $M$ is defined as in \eqref{con0},
such that   
    \begin{equation*}
        \phi(\az x) \leq c_{\az,M} \phi(x)
    \end{equation*}
    for any $x\in [0,\fz)$.
\end{lemm}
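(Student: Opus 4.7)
The plan is to reduce to doubling via the quasi-subadditivity hypothesis \eqref{con0} and then iterate. The argument splits naturally into the easy regime $\az \le 1$ and the main regime $\az > 1$.

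First, for $\az \in (0,1]$, strict monotonicity of $\phi$ (part (1) of Assumption \ref{assume:phi}) gives $\phi(\az x) \leq \phi(x)$ for every $x \geq 0$, so one may take $c_{\az,M} = 1$.

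For $\az > 1$, the key observation is that \eqref{con0} applied with $s = t = y$ yields the doubling estimate
\begin{equation*}
    \phi(2y) = \phi(y + y) \leq M\bigl(\phi(y) + \phi(y)\bigr) = 2M\,\phi(y), \qquad y \in [0,\fz).
\end{equation*}
Iterating this inequality $n$ times gives $\phi(2^n y) \leq (2M)^n \phi(y)$ for every $n \in \bN$ and $y \geq 0$. Next, I would choose $n = n(\az) := \lceil \log_2 \az \rceil$, so that $\az \leq 2^n$. By monotonicity,
\begin{equation*}
    \phi(\az x) \leq \phi(2^n x) \leq (2M)^n \phi(x),
\end{equation*}
and setting $c_{\az,M} := (2M)^{\lceil \log_2 \az \rceil}$ completes the proof in this case.

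There is no real obstacle here: the only thing to be careful about is that the constant depends on $\az$ through $\lceil \log_2 \az \rceil$ (so it blows up as $\az \to \fz$, which is consistent with the statement), and that the case $x = 0$ is trivial since $\phi(0) \geq 0$ and the inequality $\phi(0) \leq c_{\az,M}\phi(0)$ holds for any $c_{\az,M} \geq 1$. The whole lemma is just the standard observation that a quasi-subadditive function is automatically quasi-homogeneous in the dilation parameter, with the dependence on $\az$ being at most polynomial in $\az$ of degree $\log_2(2M)$.
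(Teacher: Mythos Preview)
Your argument is correct and is exactly the kind of elementary iteration of \eqref{con0} the paper has in mind; the paper itself omits the proof, merely remarking that the lemma ``easily follows from \eqref{con0}''. There is nothing to add.
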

Lemma \ref{lemm:quasilinear} easily follows from \eqref{con0}, we omit the details here.

\medskip


\begin{lemm}\label{lemm:integrability}
    Given a simply connected domain $\Oz$ and the corresponding Riemann mapping $f: \D \to \Oz$,
    set $z_0 := f(0) \in\Oz$.
    Let $\phi$ be a function satisfying Assumption \ref{assume:phi}.
    If $\phi(h_{\Oz}(z_0,\cdot))\in L^1$, then $\phi(h_{\D}(0,\cdot))\in L^1$.
    Moreover,
    \begin{equation}\label{lemma:phi:equ}
        \int^1_0 \phi\left(\log\left(\frac{1}{1 - r}\right)\right)\,\od r < \fz.
    \end{equation}
\end{lemm}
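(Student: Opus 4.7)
The plan is to use conformal invariance of the hyperbolic metric together with a Parseval-type lower bound on $|f'|^2$ to transfer the integrability hypothesis from $\Oz$ back to $\D$. First, I would invoke the invariance $h_{\Oz}(z_0, f(w)) = h_{\D}(0, w)$ and change variables $z = f(w)$ in the hypothesis, obtaining
\begin{equation*}
\fz > \int_{\Oz} \phi(h_{\Oz}(z_0, z))\, \od z = \int_{\D} \phi(h_{\D}(0, w))\, |f'(w)|^2\, \od w.
\end{equation*}
Passing to polar coordinates $w = re^{i\theta}$ and using $h_{\D}(0, re^{i\theta}) = \log\frac{1+r}{1-r}$, this becomes
\begin{equation*}
\int_0^1 \phi\!\left(\log\frac{1+r}{1-r}\right) r \left(\int_0^{2\pi} |f'(re^{i\theta})|^2\, \od\theta\right) \od r < \fz.
\end{equation*}

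The heart of the argument is to produce a positive, uniform-in-$r$ lower bound on the inner angular integral. Expanding $f'(w) = \sum_{n \geq 0} b_n w^n$ and applying Parseval's identity gives
\begin{equation*}
\int_0^{2\pi} |f'(re^{i\theta})|^2\, \od\theta = 2\pi \sum_{n \geq 0} |b_n|^2 r^{2n} \geq 2\pi |f'(0)|^2 > 0,
\end{equation*}
where $|f'(0)| > 0$ because $f$ is a biholomorphism. Combined with the previous display, this would yield
\begin{equation*}
\int_0^1 \phi\!\left(\log\frac{1+r}{1-r}\right) r\, \od r < \fz.
\end{equation*}

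From this one-dimensional estimate, both assertions of the lemma should follow by routine manipulation. The $L^1$ claim on $\D$ reduces, via the same polar computation, to exactly this integral (up to a factor $2\pi$). For the moreover-statement \eqref{lemma:phi:equ}, I would split $\int_0^1 \phi(\log(1/(1-r)))\, \od r$ at $r = 1/2$: on $[0,1/2]$ the integrand is bounded by $\phi(\log 2)$, while on $[1/2, 1]$ one inserts the weight $r \geq 1/2$ at the cost of a factor $2$ and uses $\log\frac{1}{1-r} \leq \log\frac{1+r}{1-r}$ together with the monotonicity of $\phi$ to reduce to the finite integral just established. I do not anticipate a serious obstacle; the only point worth stressing is that it is the nonvanishing of $f'(0)$, extracted via Parseval, that permits the weight $|f'(w)|^2$ to be stripped from the integrand — without such a uniform lower bound the weighted integrability on $\Oz$ would not transfer to the unweighted integral on $\D$.
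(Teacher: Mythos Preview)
Your proposal is correct and follows the same overall strategy as the paper: change variables via $z=f(w)$, pass to polar coordinates, and then establish a positive lower bound on the angular mean $\int_{S_r}|f'|^2$ to strip off the conformal weight.

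The one noteworthy difference is in how that lower bound is obtained. The paper argues geometrically: for $r\ge r_0$ the curve $f(S_r)$ encloses a fixed disk $B(\omega_0,\tilde r_0)\subset f(r_0\D)$, so $\int_{S_r}|f'|\ge 2\pi\tilde r_0$, and then Cauchy--Schwarz upgrades this to a lower bound on $\int_{S_r}|f'|^2$. This only works for $r\ge r_0$, so the paper handles $r_0\D$ separately via boundedness. Your Parseval argument is cleaner: it gives $\int_0^{2\pi}|f'(re^{i\theta})|^2\,\od\theta\ge 2\pi|f'(0)|^2$ uniformly for \emph{all} $r\in(0,1)$, with no need to split off a core disk or invoke Cauchy--Schwarz. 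Both arrive at the same conclusion, but your route is more direct.
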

\begin{proof}
    By a change of variable and the fact that $h_{\D}(0,z)$ depends only on $|z|$, we obtain
    \begin{align*}
        \fz > \int_{\Oz} \phi(h_{\Oz}(z_0,z))\,\od z 
        & = \int_{\D} \phi(h_{\D}(0,z)) |\det Df(z)|\,\od z\\
        & = \int^1_0 \int_{S_r} \phi(h_{\D}(0,z)) |\det Df(z)|\,\od \sz_r(z)\od r\\
        & = \int^1_0 \phi(h_{\D}(0,r)) \int_{S_r} |\det Df(z)|\,\od \sz_r(z)\od r.
    \end{align*}
    The first equality holds due to the invariance of hyperbolic metric under conformal mappings.

    Notice that, for a fixed $r_0\in (0,1)$, there is $\wt{r}_0\in (0,\fz)$ such that
    $B(\oz_0,\wt{r}_0)\subset f(r_0\D)$.
    We conclude that, for any $r\in [r_0,1)$,
    \begin{align*}
        \int_{S_r} |f'(z)|\,\od \sz_r(z) \geq 2\pi \wt{r}_0.
    \end{align*}
    By the Cauchy--Riemann equations, we have $|f'(z)|^2 = |\det Df(z)|$ and hence
    \begin{align*}
        \int_{S_r} |\det Df(z)|\,\od \sz_r(z) \geq \dz,
    \end{align*}
    by the Cauchy--Schwartz inequality where $\dz \in (0,\fz)$ is a constant. Thus, 
    \begin{align*}
        \int_{\D\setminus r_0 \D} \phi(h_{\D}(0,z)) \od z
        & \leq 2\pi \int^1_{r_0} \phi(h_{\D}(0,r))\,\od r\\
        & \leq \frac{2\pi}{\dz} \int_{\Oz} \phi(h_{\Oz}(z_0,z))\,\od z < \fz.
    \end{align*}
    Since $\phi(h_{\D}(0,\cdot))$ is bounded in $r_0\D$, we conclude that $\phi(h_{\D}(0,\cdot))\in L^1$.
    Moreover,
    by Assumption \ref{assume:phi}, we have
    \begin{align*}
        2\pi \int^1_0 r \phi\left(\log\left(\frac{1}{1 - r}\right)\right)\,\od r \leq \int_{\D} \phi\left(h_{\D}(0,z)\right)\,\od z < \fz.
    \end{align*}
    Since $\phi$ is non-decreasing, it follows that \eqref{lemma:phi:equ} holds.
\end{proof}

\medskip

\begin{lemm}\label{lemm:series}
    Let $\{a_n\}^{\fz}_{n = 1}\subset \R^+$.
    If $\sum^{\fz}_{n = 1} a_n = \fz$, then
    \begin{equation}\label{lemm:series:equ}
        \sum^{\fz}_{n = 1} a_n \left(\sum^n_{k = 1}a_k\right)^{\dz - 1}
    \end{equation}
    \begin{itemize}
        \item[(i)] diverges if $\dz\in [0,\fz)$; 
        \item[(ii)] converges if $\dz\in (-\fz,0)$. 
    \end{itemize}
\end{lemm}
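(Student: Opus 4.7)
Set $S_n := \sum_{k=1}^n a_k$, so that $S_n$ is strictly increasing with $S_n \to \fz$ by hypothesis. I would handle the two cases separately, each via a short comparison argument.

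For part (ii), since $\dz - 1 < -1 < 0$, the map $x\mapsto x^{\dz-1}$ is strictly decreasing on $(0,\fz)$. Hence for each $n\geq 2$ and every $x\in[S_{n-1},S_n]$ one has $x^{\dz-1}\geq S_n^{\dz-1}$, so
$$a_n S_n^{\dz-1} \;=\; (S_n - S_{n-1})S_n^{\dz-1} \;\leq\; \int_{S_{n-1}}^{S_n} x^{\dz-1}\,\od x.$$
Telescoping the antiderivative $x^{\dz}/\dz$ and using $S_n^{\dz}\to 0$ (because $\dz<0$), the tail from $n=2$ onward is bounded by $\int_{S_1}^{\fz} x^{\dz-1}\,\od x = -S_1^{\dz}/\dz<\fz$. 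Since the first term $a_1 S_1^{\dz-1}$ is finite on its own, convergence of the full series \eqref{lemm:series:equ} follows.

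For part (i), I would reduce to the classical Abel--Dini case $\dz=0$. Because $S_n \to \fz$, eventually $S_n\geq 1$, and for $\dz\geq 0$ this forces $S_n^{\dz-1}\geq S_n^{-1}$; hence the tail of $\sum a_n S_n^{\dz-1}$ dominates that of $\sum a_n/S_n$, and it suffices to prove divergence for $\dz=0$. For this I would use a grouping trick: since $S_n\to\fz$, choose indices $n_0<n_1<n_2<\cdots$ with $S_{n_k}\geq 2 S_{n_{k-1}}$. Then
$$\sum_{n=n_{k-1}+1}^{n_k} \frac{a_n}{S_n} \;\geq\; \frac{S_{n_k}-S_{n_{k-1}}}{S_{n_k}} \;\geq\; \frac{1}{2},$$
and summing over $k\in\bN^+$ yields divergence.

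No genuine obstacle arises; the arguments are standard. The only small care points are to avoid $S_0=0$ in (ii) by isolating the $n=1$ term, and in (i) to note that the reduction $S_n^{\dz-1}\geq S_n^{-1}$ holds only once $S_n\geq 1$, which is eventually true and thus harmless for the question of divergence.
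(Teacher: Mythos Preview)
Your proof is correct. Both parts are handled by clean comparison arguments and there is no gap.

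The route differs from the paper's in both halves. For the divergence in (i), the paper uses the logarithm inequality $\log(1+a_{n+1}/S_n)\le a_{n+1}/S_n$ and telescopes $\log S_n$ to show $\sum a_{n+1}/S_n=\infty$, then compares to the case $\dz>0$; you instead reduce to $\dz=0$ and run a Cauchy-condensation style doubling argument on the blocks between indices where $S_{n_k}$ doubles. Your reduction step $S_n^{\dz-1}\ge S_n^{-1}$ once $S_n\ge1$ is the same idea as the paper's ``$\gtrsim$'' step. For the convergence in (ii), the paper applies Abel summation together with Bernoulli's inequality $(1+x)^{1-\dz}\ge 1+(1-\dz)x$ to squeeze out the bound $\sum_{k\ge2}a_kS_k^{\dz-1}\le a_1^{\dz}/|\dz|$; you obtain essentially the same bound more directly via the integral comparison $a_nS_n^{\dz-1}\le\int_{S_{n-1}}^{S_n}x^{\dz-1}\,\od x$. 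Your arguments are shorter and avoid the index bookkeeping of summation by parts, while the paper's approach makes the constant $1/|\dz|$ in the bound equally explicit; substantively both yield the same Abel--Dini conclusion.
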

\begin{proof}
    Given $n\in\bN^+$, let $S_n := \sum^n_{k = 1}a_k$,
    we have
    \begin{align*}
        \log \left(\sum^{n + 1}_{k = 1} a_n\right) - \log \left(\sum^n_{k = 1} a_n\right)
        & = \log \left(1 + \frac{a_{n + 1}}{S_n}\right)\leq \frac{a_{n + 1}}{S_n}.
    \end{align*}
    Then,
    \begin{align*}
        \sum^n_{k = 1}\frac{a_{k + 1}}{S_k} \geq \log \left(\sum^{n + 1}_{k = 1} a_n\right) - \log a_1 \to\fz\quad\text{as} \quad n\to\fz,
    \end{align*}
    which shows that \eqref{lemm:series:equ} diverges when $\dz = 0$.
    Moreover, notice that $\{a_n\}^{\fz}_{n = 1}$ is positive and $\sum^{\fz}_{n = 1}a_n = \fz$,
    we have, for any $\dz > 0$,
    \begin{align*}
        \sum^{\fz}_{n = 1} a_n \left(\sum^n_{k = 1}a_k\right)^{\dz - 1} \gtrsim  \sum^{\fz}_{n = 1} a_n \left(\sum^n_{k = 1}a_k\right)^{-1} = \fz.
    \end{align*}
    \medskip
    When $\dz\in (-\fz,0)$, using summation by parts and Bernoulli's inequality, we get
    \begin{align*}
        \sum^{n}_{k = 1} a_k \left(\sum^k_{j = 1}a_j\right)^{\dz - 1}
        & = \left(S_n\right)^{\dz - 1}\sum^n_{k = 1} a_n - \sum^{n - 1}_{k = 1}\left(S_{j + 1}\right)^{\dz - 1} - \left(S_j\right)^{\dz - 1}\sum^j_{k = 0}a_k\\
        & = \left(S_n\right)^{\dz} + \sum^{n - 1}_{k = 1}\frac{\left(S_j + a_{j + 1}\right)^{1 - \dz} - \left(S_j\right)^{1 - \dz}}{\left(S_{j + 1}\right)^{1 - \dz}\left(S_j\right)^{1 - \dz}}S_j\\
        & = \left(S_n\right)^{\dz} + \sum^{n - 1}_{k = 1}\frac{\left(1 + \frac{a_{j + 1}}{S_j}\right)^{1 - \dz} - 1}{\left(S_{j + 1}\right)^{1 - \dz}}S_j\\
        & \geq \left(S_n\right)^{\dz} + (1 - \dz)\sum^{n - 1}_{k = 1}\frac{a_{j + 1}}{\left(S_{j + 1}\right)^{1 - \dz}},
    \end{align*}
    then, it holds that
    \begin{align*}
        (a_1)^{\dz} 
        & \geq \left(S_n\right)^{\dz} + (-\dz) \sum^{n}_{k = 2} a_k \left(\sum^k_{j = 1}a_j\right)^{\dz - 1}\\
        & \to \sum^{\fz}_{k = 2} a_k \left(\sum^k_{j = 1}a_j\right)^{\dz - 1}\quad\text{as}\quad n\to\fz.
    \end{align*}
    This finishes the proof.
\end{proof}

\medskip

\begin{proof}[Proof of Theorem \ref{main-2}]
By Assumption \ref{assume:phi} and \eqref{con2-counter}, we obtain
$$\sum_{n\in\bN^+}\frac{1}{\phi(n)} \geq \sum^{\fz}_{n = 2}\int^{n + 1}_n \frac{1}{\phi(x)}\,\od x = \fz$$
For any $n\in\bN^+$, set $a_0 = 0$,
$$a_n := \frac{1}{\phi(n)}\left(\sum^n_{k = 1}\frac{1}{\phi(k)}\right)^{-2/3}\quad\text{and}\quad b_n := \sum^{n - 1}_{k = 0} a_k.$$
Then
\begin{itemize}
    \item[(1)] By Assumption \ref{assume:phi}(1), 
    $a_n$ is non-negative, non-increasing and 
    $$\lim_{n\to\fz} a_n = 0;$$
    \item[(2)] By Assumption \ref{assume:phi}(2), there is a constant $c_M > 1$ such that, for all $n\in\bN^+$,
    \begin{equation}\label{equ:decrease}
        \frac{a_n}{a_{n + 1}} \leq c_M;
    \end{equation}
    \item[(3)] By Lemma \ref{lemm:series}, 
    \begin{equation}\label{equ:convergence}
        \sum_{n\in\bN^+} (a_n)^2 \phi(n) < \fz\quad\text{and}\quad \sum_{n\in\bN^+}a_n = \fz. 
    \end{equation}
\end{itemize}
Consider the domain obtained by gluing by a half disk with a sequence of trapezoids:
\begin{align*}
    R := A \cup \bigcup_{n\in\bN^+} R_n,
\end{align*}
where
\begin{align*}
    &A = \{(x,y)\in\R^2: x\leq 0, |x|^2 + |y|^2 \leq (c_Ma_1)^2\},\\
    &R_n:= \left\{(x + b_n,y)\in\R^2 : 0\leq x \leq a_n, 0\leq |y|\leq c_M\left(a_n - \frac{x(a_n - a_{n + 1})}{a_n}\right)\right\}.
\end{align*}

\tikzset{every picture/.style={line width=0.75pt}} 
\begin{figure}[h]
    \centering
\begin{tikzpicture}[x=0.7pt,y=0.7pt,yscale=-0.8,xscale=0.8]

\draw    (59.09,171.91) -- (581.09,171.91) ;
\draw [shift={(583.09,171.91)}, rotate = 180] [color={rgb, 255:red, 0; green, 0; blue, 0 }  ][line width=0.75]    (10.93,-3.29) .. controls (6.95,-1.4) and (3.31,-0.3) .. (0,0) .. controls (3.31,0.3) and (6.95,1.4) .. (10.93,3.29)   ;
\draw    (213.09,318.91) -- (213.09,22.91) ;
\draw [shift={(213.09,20.91)}, rotate = 90] [color={rgb, 255:red, 0; green, 0; blue, 0 }  ][line width=0.75]    (10.93,-3.29) .. controls (6.95,-1.4) and (3.31,-0.3) .. (0,0) .. controls (3.31,0.3) and (6.95,1.4) .. (10.93,3.29)   ;
\draw  [draw opacity=0] (213.09,297.29) .. controls (141.84,297.29) and (84.09,242.22) .. (84.09,174.29) .. controls (84.09,106.35) and (141.84,51.29) .. (213.09,51.29) -- (213.09,174.29) -- cycle ; \draw   (213.09,297.29) .. controls (141.84,297.29) and (84.09,242.22) .. (84.09,174.29) .. controls (84.09,106.35) and (141.84,51.29) .. (213.09,51.29) ;  
\draw  [dash pattern={on 0.84pt off 2.51pt}]  (339.09,87.91) -- (339.09,262.91) ;
\draw    (213.09,51.29) -- (339.09,87.91) ;
\draw    (213.09,297.29) -- (339.09,262.91) ;
\draw  [dash pattern={on 0.84pt off 2.51pt}]  (213.09,87.91) -- (339.09,87.91) ;
\draw  [dash pattern={on 0.84pt off 2.51pt}]  (215.09,264.91) -- (339.09,262.91) ;
\draw  [dash pattern={on 0.84pt off 2.51pt}]  (428.86,105.91) -- (428.86,247.46) ;
\draw  [dash pattern={on 0.84pt off 2.51pt}]  (213.09,105.91) -- (428.86,105.91) ;
\draw    (339.09,87.91) -- (428.86,105.91) ;
\draw  [dash pattern={on 0.84pt off 2.51pt}]  (212.86,247.46) -- (428.86,247.46) ;
\draw    (339.09,262.91) -- (428.86,247.46) ;
\draw    (428.86,105.91) -- (505.09,113.91) ;
\draw    (428.86,247.46) -- (504.09,237.91) ;
\draw  [dash pattern={on 0.84pt off 2.51pt}]  (213.09,115.91) -- (505.09,113.91) ;
\draw  [dash pattern={on 0.84pt off 2.51pt}]  (213.09,234.91) -- (504.09,237.91) ;
\draw  [dash pattern={on 0.84pt off 2.51pt}]  (505.09,113.91) -- (504.09,237.91) ;

\draw (180,38) node [anchor=north west][inner sep=0.75pt]  [font=\scriptsize] [align=left] {$c_Ma_1$};
\draw (170,301) node [anchor=north west][inner sep=0.75pt]  [font=\scriptsize] [align=left] {$-c_Ma_1$};
\draw (40,174.91) node [anchor=north west][inner sep=0.75pt]  [font=\scriptsize] [align=left] {$-c_Ma_1$};
\draw (180,79) node [anchor=north west][inner sep=0.75pt]  [font=\scriptsize] [align=left] {$c_Ma_2$};
\draw (318.59,174.91) node [anchor=north west][inner sep=0.75pt]  [font=\scriptsize] [align=left] {$b_2$};
\draw (170,260) node [anchor=north west][inner sep=0.75pt]  [font=\scriptsize] [align=left] {$-c_Ma_2$};
\draw (407.59,174.91) node [anchor=north west][inner sep=0.75pt]  [font=\scriptsize] [align=left] {$b_3$};
\draw (180,96.91) node [anchor=north west][inner sep=0.75pt]  [font=\scriptsize] [align=left] {$c_Ma_3$};
\draw (170,242.91) node [anchor=north west][inner sep=0.75pt]  [font=\scriptsize] [align=left] {$-c_Ma_3$};
\draw (145,139) node [anchor=north west][inner sep=0.75pt]   [align=left] {$A$};
\draw (260,138) node [anchor=north west][inner sep=0.75pt]   [align=left] {$R_1$};
\draw (373,138) node [anchor=north west][inner sep=0.75pt]   [align=left] {$R_2$};
\draw (455,136) node [anchor=north west][inner sep=0.75pt]   [align=left] {$R_3$};
\draw (483.59,173.91) node [anchor=north west][inner sep=0.75pt]  [font=\scriptsize] [align=left] {$b_4$};
\draw (180,111.91) node [anchor=north west][inner sep=0.75pt]  [font=\scriptsize] [align=left] {$c_Ma_4$};
\draw (170,226.91) node [anchor=north west][inner sep=0.75pt]  [font=\scriptsize] [align=left] {$-c_Ma_4$};
\draw (535,133) node [anchor=north west][inner sep=0.75pt]   [align=left] {$\cdots$};
\draw (535,186) node [anchor=north west][inner sep=0.75pt]   [align=left] {$\cdots$};

\end{tikzpicture}
\caption{The domain $R$}
\end{figure}
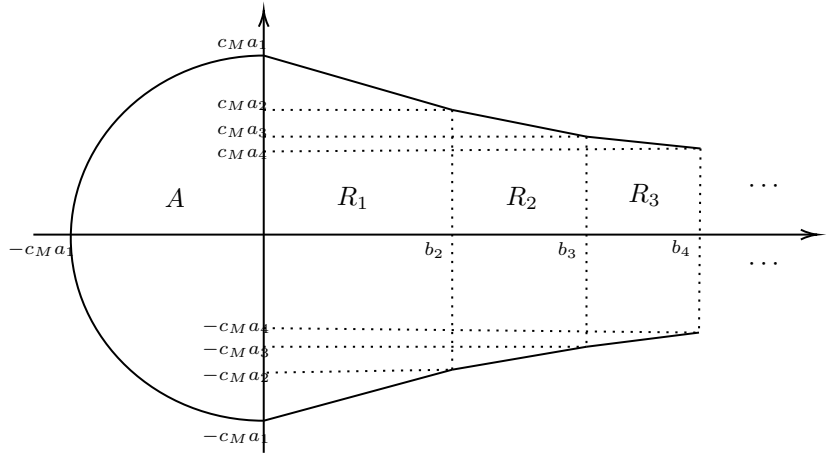

\medskip


Set $r_n(s) := c_M\left(a_n - \frac{s(a_n - a_{n + 1})}{a_n}\right)$ for all $s\in [0,a_n]$.
Recall that $k_{R}$ denotes the quasi-hyperbolic metric of the domain $R$.
By \eqref{equ:decrease}, it holds that
\begin{align*}
    k_R(0,(s + b_n,t))
    & \leq k_R(0,(s + b_n,0)) + k_R((s + b_n,0),(s + b_n,t))\\
    & \lesssim \sum^{n}_{k = 1} \frac{a_k}{c_Ma_{k + 1}} + \int^t_0 \frac{1}{r_n(s) - r}\,\od r
    \lesssim n + \log\left(\frac{r_n(s)}{r_n(s) - t}\right),
\end{align*}
where the equivalent constants do not rely on $n$, $s$ and $t$.
Then, by Lemma \ref{lemm:quasilinear} and Lemma \ref{lemm:integrability}, we obtain
\begin{align*}
    \int_{R_n} \phi(k_R(0,z))\,\od z
    &\lesssim |R_n| \phi(n) + \int^{a_n}_0\int^{r_n(s)}_{-r_n(s)}\phi\left(\log \frac{r_n(s)}{r_n(s) - |t|}\right)\,\od t\,\od s\\
    &\lesssim (a_n)^2 \phi(n) + \left(\int^{a_n}_0 r_n(s)\,\od s\right)\left(\int^1_0 \phi\left(\log \frac{1}{1 - |t|}\right)\,\od t\right)\\
    &\lesssim (a_n)^2 \phi(n) + (a_n)^2 \left(\int^1_0 \phi\left(\log \frac{1}{1 - t}\right)\,\od t\right).
\end{align*}
Combining this with \eqref{equ:convergence} and Lemma \ref{lemm:integrability}, we have
\begin{align}\label{equ:quasihyper:intergrability}
    \int_R \phi(k_R(0,z))\,\od z 
    & = \int_{A} \phi(k_R(0,z))\,\od z + \sum_{n\in\bN^+}\int_{R_n} \phi(k_R(0,z))\,\od z\notag\\
    & \lesssim \int_{A} \phi(k_R(0,z))\,\od z \notag\\
    & \quad \qquad+ \left(\sum_{n\in\bN^+} (a_n)^2 \phi(n)\right)\left(1 + \int^1_0 \phi\left(\log \frac{1}{1 - t}\right)\,\od t\right) < \fz. 
\end{align}

\smallskip

On the other hand, because $\sum_{n\in\bN^+} (a_n)^2 < \fz$, 
and $a_n$ is positive and decreasing,
without loss of generality, assume that
$$\sum_{n\in\bN^+} (a_n)^2 = \sum_{n\in\bN^+}\frac{1}{4^n}.$$
For any $n\in\bN^+$, set $i_n$ the maximal integer such that
$$\sum^{\fz}_{k = i_n} (a_k)^2 \geq \sum^{\fz}_{k = n + 1}\frac{1}{4^k}.$$
Then, the sequence $\{i_n\}_{n\in\bN^+}$ is non-decreasing, and, for all $n\in\bN^+$ such that $i_{n + 1} > i_n$,
\begin{equation}\label{equ:category}
    \sum^{i_{n + 1}}_{k = i_{n} + 1} (a_k)^2 \leq \frac{1}{4^{n - 1}}.
\end{equation}

We assume, for convenience, 
that the sequence $\{i_n\}_{n\in\bN^+}$ is strictly increasing.
Set $l_n = \frac{1}{2^n}$. 
We aim to fold the union $\bigcup^{i_{n + 1}}_{k = i_n + 1} R_n$
into a rectangle $Q_n$ with height $8c_Ml_n$ and width $w_n$.
If the series of $\sum w_n$ converges, then the desired Jordan domain is obtained.

The construction proceeds as follows:
For a fixed $n\in\bN^+$, set $m_1 := i_n + 1$. Let 
$m_2$ be the minimal integer such that $m_1 < m_2 < i_{n + 1} - 1$ and
$$\frac{l_n}{2} \leq a_{m_1} + a_{m_1 + 1} + \cdots a_{m_2 - 1}\leq 2 l_n.$$
Next, let $s_1$ be the smallest non-negative integer satisfying
\begin{equation}\label{equ:overlap}
    3c_M a_{m_2} + a_{m_2} + a_{m_2 + 1} + \cdots + a_{m_2 + s_1 - 1} \geq c_M a_{m_1},
\end{equation}
and let $m_3$ be the minimal integer such that $m_2 + s_1 \leq  m_3 < i_{n + 1} - 1$ with
\begin{equation}\label{equ:enoughlength}
    \frac{l_n}{2} \leq a_{m_2 + s_1} + a_{m_1 + s_1 + 1} + \cdots a_{m_3 - 1}\leq 2 l_n.
\end{equation}

By induction, there exists an integer $K_n\in\bN^+$ and a sequence of integers $\{m_d\}^{K_n}_{d = 1}$ such that
$$i_n + 1 = m_1 \leq m_2 \leq m_2 + s_1 \leq m_3 \leq m_3 + s_2 \leq m_4 \leq \cdots \leq m_{K_n} = i_{n + 1}.$$
Note that $m_{K_n}$ does not necessarily satisfy \eqref{equ:enoughlength}.
The folding domain is as follows:

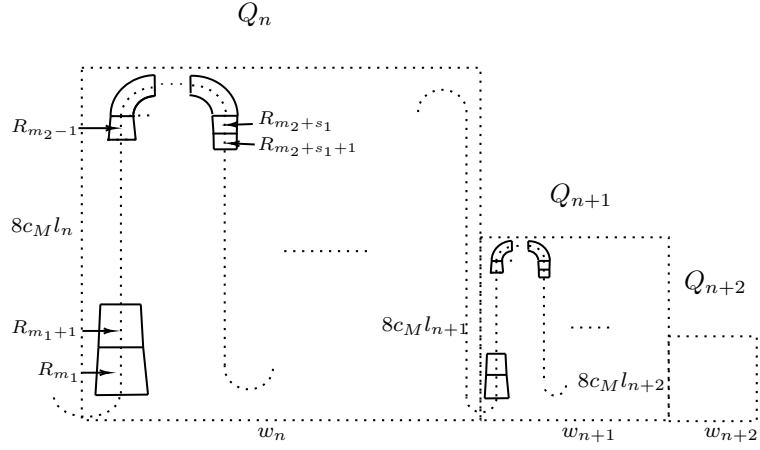
\begin{figure}[h]
\centering
\begin{tikzpicture}[x=0.75pt,y=0.75pt,yscale=-1,xscale=1]
    
    \draw  [dash pattern={on 0.84pt off 2.51pt}] (91.82,87.32) -- (290.82,87.32) -- (290.82,264.61) -- (91.82,264.61) -- cycle ;
    \draw  [dash pattern={on 0.84pt off 2.51pt}] (384.73,222.41) -- (428.73,222.41) -- (428.73,265.09) -- (384.73,265.09) -- cycle ;
    \draw [line width=0.75]    (128.18,91.02) -- (128.32,101.76) ;
    \draw    (101.08,206.35) -- (100.12,227.97) ;
    \draw    (122.46,227.93) -- (124.87,251.72) ;
    \draw  [dash pattern={on 0.84pt off 2.51pt}]  (111.35,111.54) -- (111.34,251.71) ;
    \draw  [dash pattern={on 0.84pt off 2.51pt}]  (104.55,123.53) -- (119.19,123.37) ;
    \draw    (117.52,111.76) -- (118.71,123.51) ;
    \draw  [dash pattern={on 0.84pt off 2.51pt}]  (106.52,111.67) -- (128.85,111.2) ;
    \draw    (100.12,227.97) -- (122.66,227.93) ;
    \draw    (100.12,227.97) -- (99.43,238.99) -- (98.61,252) ;
    \draw    (101.08,206.35) -- (121.25,206.36) ;
    \draw    (121.25,206.36) -- (122.46,227.93) ;
    \draw    (106.21,111.69) -- (105.16,124.01) ;
    \draw  [draw opacity=0] (106.21,111.69) .. controls (106.2,111.5) and (106.2,111.31) .. (106.2,111.13) .. controls (106.21,100.21) and (115.99,91.34) .. (128.18,91.02) -- (128.85,111.15) -- cycle ; \draw   (106.21,111.69) .. controls (106.2,111.5) and (106.2,111.31) .. (106.2,111.13) .. controls (106.21,100.21) and (115.99,91.34) .. (128.18,91.02) ;  
    \draw  [draw opacity=0][dash pattern={on 0.84pt off 2.51pt}] (111.35,111.54) .. controls (111.88,103.86) and (118.67,97.72) .. (127.15,97.25) -- (128.21,112.5) -- cycle ; \draw  [dash pattern={on 0.84pt off 2.51pt}] (111.35,111.54) .. controls (111.88,103.86) and (118.67,97.72) .. (127.15,97.25) ;  
    \draw  [draw opacity=0] (117.52,111.78) .. controls (117.52,111.69) and (117.51,111.6) .. (117.51,111.51) .. controls (117.52,106.06) and (122.27,101.63) .. (128.14,101.58) -- (128.24,111.52) -- cycle ; \draw   (117.52,111.78) .. controls (117.52,111.69) and (117.51,111.6) .. (117.51,111.51) .. controls (117.52,106.06) and (122.27,101.63) .. (128.14,101.58) ;  
    \draw  [dash pattern={on 0.84pt off 2.51pt}]  (163.12,111) -- (163.17,237.22) ;
    \draw    (156.96,111.49) -- (157.56,120.21) ;
    \draw    (169.5,111.13) -- (169.18,120.05) ;
    \draw    (157.56,120.21) -- (157.71,128.35) ;
    \draw    (169.18,120.05) -- (169.18,128.35) ;
    \draw  [draw opacity=0][dash pattern={on 0.84pt off 2.51pt}] (187.24,238.75) .. controls (186.92,244.48) and (181.64,249.04) .. (175.18,249.04) .. controls (168.51,249.03) and (163.12,244.16) .. (163.12,238.17) .. controls (163.12,237.85) and (163.14,237.54) .. (163.17,237.22) -- (175.19,238.18) -- cycle ; \draw  [dash pattern={on 0.84pt off 2.51pt}] (187.24,238.75) .. controls (186.92,244.48) and (181.64,249.04) .. (175.18,249.04) .. controls (168.51,249.03) and (163.12,244.16) .. (163.12,238.17) .. controls (163.12,237.85) and (163.14,237.54) .. (163.17,237.22) ;  
    \draw    (98.61,252) -- (124.87,251.72) ;
    \draw    (104.55,123.53) -- (118.71,123.51) ;
    \draw    (157,120.41) -- (169.73,120.41) ;
    \draw    (157.71,128.35) -- (169.18,128.35) ;
    \draw    (156.96,111.49) -- (169.7,111.49) ;
    \draw    (106.52,111.67) -- (118.25,111.67) ;
    \draw  [dash pattern={on 0.84pt off 2.51pt}]  (192.86,179.57) -- (234.86,179.57) ;
    \draw    (92.14,240.71) -- (103.14,240.71) ;
    \draw [shift={(105.14,240.71)}, rotate = 180] [color={rgb, 255:red, 0; green, 0; blue, 0 }  ][line width=0.75]    (4.37,-1.32) .. controls (2.78,-0.56) and (1.32,-0.12) .. (0,0) .. controls (1.32,0.12) and (2.78,0.56) .. (4.37,1.32)   ;
    \draw    (92.14,219.71) -- (105.14,219.71) ;
    \draw [shift={(107.14,219.71)}, rotate = 180] [color={rgb, 255:red, 0; green, 0; blue, 0 }  ][line width=0.75]    (4.37,-1.32) .. controls (2.78,-0.56) and (1.32,-0.12) .. (0,0) .. controls (1.32,0.12) and (2.78,0.56) .. (4.37,1.32)   ;
    \draw    (90.14,117.71) -- (106.14,117.71) ;
    \draw [shift={(108.14,117.71)}, rotate = 180] [color={rgb, 255:red, 0; green, 0; blue, 0 }  ][line width=0.75]    (4.37,-1.32) .. controls (2.78,-0.56) and (1.32,-0.12) .. (0,0) .. controls (1.32,0.12) and (2.78,0.56) .. (4.37,1.32)   ;
    \draw    (177.18,115.62) -- (167.18,115.98) ;
    \draw [shift={(165.18,116.05)}, rotate = 357.95] [color={rgb, 255:red, 0; green, 0; blue, 0 }  ][line width=0.75]    (4.37,-1.32) .. controls (2.78,-0.56) and (1.32,-0.12) .. (0,0) .. controls (1.32,0.12) and (2.78,0.56) .. (4.37,1.32)   ;
    \draw    (178.18,125.92) -- (168.18,125.44) ;
    \draw [shift={(166.18,125.35)}, rotate = 2.73] [color={rgb, 255:red, 0; green, 0; blue, 0 }  ][line width=0.75]    (4.37,-1.32) .. controls (2.78,-0.56) and (1.32,-0.12) .. (0,0) .. controls (1.32,0.12) and (2.78,0.56) .. (4.37,1.32)   ;
    \draw  [draw opacity=0] (146.03,91.37) .. controls (146.34,91.36) and (146.66,91.35) .. (146.98,91.35) .. controls (159.46,91.37) and (169.57,100.34) .. (169.62,111.42) -- (146.96,111.49) -- cycle ; \draw   (146.03,91.37) .. controls (146.34,91.36) and (146.66,91.35) .. (146.98,91.35) .. controls (159.46,91.37) and (169.57,100.34) .. (169.62,111.42) ;  
    \draw  [draw opacity=0] (117.55,110.67) .. controls (118.01,105.71) and (122.4,101.79) .. (127.81,101.59) -- (128.24,111.53) -- cycle ; \draw   (117.55,110.67) .. controls (118.01,105.71) and (122.4,101.79) .. (127.81,101.59) ;  
    \draw  [draw opacity=0][dash pattern={on 0.84pt off 2.51pt}] (146.85,96.21) .. controls (146.89,96.21) and (146.94,96.21) .. (146.98,96.21) .. controls (156.09,96.22) and (163.51,102.75) .. (163.84,110.92) -- (146.96,111.49) -- cycle ; \draw  [dash pattern={on 0.84pt off 2.51pt}] (146.85,96.21) .. controls (146.89,96.21) and (146.94,96.21) .. (146.98,96.21) .. controls (156.09,96.22) and (163.51,102.75) .. (163.84,110.92) ;  
    \draw [line width=0.75]    (146.03,91.37) -- (146.14,100.14) ;
    \draw  [draw opacity=0] (145.58,101.14) .. controls (151.68,101.36) and (156.6,105.46) .. (156.99,110.61) -- (145.08,111.26) -- cycle ; \draw   (145.58,101.14) .. controls (151.68,101.36) and (156.6,105.46) .. (156.99,110.61) ;  
    \draw  [dash pattern={on 0.84pt off 2.51pt}] (290.82,172.61) -- (384.82,172.61) -- (384.82,264.61) -- (290.82,264.61) -- cycle ;
    \draw [line width=0.75]    (306.47,174.39) -- (306.54,179.68) ;
    \draw    (294.06,231.18) -- (293.62,241.82) ;
    \draw    (303.85,241.81) -- (304.96,253.52) ;
    \draw  [dash pattern={on 0.84pt off 2.51pt}]  (298.76,184.5) -- (298.76,253.51) ;
    \draw  [dash pattern={on 0.84pt off 2.51pt}]  (295.65,190.4) -- (302.36,190.32) ;
    \draw    (301.59,184.6) -- (302.13,190.39) ;
    \draw  [dash pattern={on 0.84pt off 2.51pt}]  (296.55,184.56) -- (306.78,184.33) ;
    \draw    (293.62,241.82) -- (303.95,241.81) ;
    \draw    (293.62,241.82) -- (293.3,247.25) -- (292.93,253.65) ;
    \draw    (294.06,231.18) -- (303.3,231.18) ;
    \draw    (303.3,231.18) -- (303.85,241.81) ;
    \draw    (296.41,184.57) -- (295.93,190.63) ;
    \draw  [draw opacity=0] (296.41,184.55) .. controls (296.41,184.46) and (296.4,184.38) .. (296.4,184.29) .. controls (296.41,178.93) and (300.88,174.56) .. (306.45,174.39) -- (306.78,184.3) -- cycle ; \draw   (296.41,184.55) .. controls (296.41,184.46) and (296.4,184.38) .. (296.4,184.29) .. controls (296.41,178.93) and (300.88,174.56) .. (306.45,174.39) ;  
    \draw  [draw opacity=0][dash pattern={on 0.84pt off 2.51pt}] (298.76,184.53) .. controls (298.99,180.75) and (302.09,177.71) .. (305.97,177.46) -- (306.49,184.97) -- cycle ; \draw  [dash pattern={on 0.84pt off 2.51pt}] (298.76,184.53) .. controls (298.99,180.75) and (302.09,177.71) .. (305.97,177.46) ;  
    \draw  [draw opacity=0] (301.59,184.6) .. controls (301.59,184.56) and (301.59,184.52) .. (301.59,184.48) .. controls (301.59,181.8) and (303.76,179.62) .. (306.45,179.59) -- (306.5,184.49) -- cycle ; \draw   (301.59,184.6) .. controls (301.59,184.56) and (301.59,184.52) .. (301.59,184.48) .. controls (301.59,181.8) and (303.76,179.62) .. (306.45,179.59) ;  
    \draw  [dash pattern={on 0.84pt off 2.51pt}]  (322.48,184.23) -- (322.5,246.38) ;
    \draw    (319.66,184.47) -- (319.93,188.77) ;
    \draw    (325.4,184.29) -- (325.25,188.68) ;
    \draw    (319.93,188.77) -- (320,192.77) ;
    \draw    (325.25,188.68) -- (325.25,192.77) ;
    \draw  [draw opacity=0][dash pattern={on 0.84pt off 2.51pt}] (333.53,247.11) .. controls (333.39,249.94) and (330.97,252.2) .. (328,252.19) .. controls (324.95,252.19) and (322.48,249.8) .. (322.48,246.85) .. controls (322.48,246.7) and (322.49,246.56) .. (322.5,246.41) -- (328.01,246.85) -- cycle ; \draw  [dash pattern={on 0.84pt off 2.51pt}] (333.53,247.11) .. controls (333.39,249.94) and (330.97,252.2) .. (328,252.19) .. controls (324.95,252.19) and (322.48,249.8) .. (322.48,246.85) .. controls (322.48,246.7) and (322.49,246.56) .. (322.5,246.41) ;  
    \draw    (292.93,253.65) -- (304.96,253.52) ;
    \draw    (295.65,190.4) -- (302.13,190.39) ;
    \draw    (319.67,188.86) -- (325.51,188.86) ;
    \draw    (320,192.77) -- (325.25,192.77) ;
    \draw    (319.66,184.47) -- (325.49,184.47) ;
    \draw    (296.55,184.56) -- (301.92,184.56) ;
    \draw  [dash pattern={on 0.84pt off 2.51pt}]  (336.1,217.99) -- (355.34,217.99) ;
    \draw  [draw opacity=0] (314.62,174.57) .. controls (314.77,174.56) and (314.93,174.56) .. (315.09,174.56) .. controls (320.8,174.56) and (325.43,178.98) .. (325.45,184.44) -- (315.08,184.47) -- cycle ; \draw   (314.62,174.57) .. controls (314.77,174.56) and (314.93,174.56) .. (315.09,174.56) .. controls (320.8,174.56) and (325.43,178.98) .. (325.45,184.44) ;  
    \draw  [draw opacity=0] (306.45,179.59) .. controls (306.45,179.59) and (306.45,179.59) .. (306.45,179.59) -- (306.5,184.49) -- cycle ; \draw   (306.45,179.59) .. controls (306.45,179.59) and (306.45,179.59) .. (306.45,179.59) ;  
    \draw  [draw opacity=0][dash pattern={on 0.84pt off 2.51pt}] (315.02,176.95) .. controls (315.04,176.95) and (315.06,176.95) .. (315.08,176.95) .. controls (319.26,176.95) and (322.67,180.18) .. (322.81,184.21) -- (315.08,184.47) -- cycle ; \draw  [dash pattern={on 0.84pt off 2.51pt}] (315.02,176.95) .. controls (315.04,176.95) and (315.06,176.95) .. (315.08,176.95) .. controls (319.26,176.95) and (322.67,180.18) .. (322.81,184.21) ;  
    \draw  [dash pattern={on 0.84pt off 2.51pt}]  (308.74,176.95) -- (312.41,176.91) ;
    \draw [line width=0.75]    (314.65,174.56) -- (314.7,178.88) ;
    \draw  [draw opacity=0] (314.46,179.38) .. controls (317.26,179.49) and (319.5,181.52) .. (319.67,184.06) -- (314.22,184.36) -- cycle ; \draw   (314.46,179.38) .. controls (317.26,179.49) and (319.5,181.52) .. (319.67,184.06) ;  
    \draw  [draw opacity=0][dash pattern={on 0.84pt off 2.51pt}] (298.77,253.97) .. controls (298.57,257.68) and (295.3,260.62) .. (291.29,260.62) .. controls (287.16,260.62) and (283.82,257.48) .. (283.82,253.61) .. controls (283.82,253.42) and (283.83,253.22) .. (283.85,253.03) -- (291.3,253.62) -- cycle ; \draw  [dash pattern={on 0.84pt off 2.51pt}] (298.77,253.97) .. controls (298.57,257.68) and (295.3,260.62) .. (291.29,260.62) .. controls (287.16,260.62) and (283.82,257.48) .. (283.82,253.61) .. controls (283.82,253.42) and (283.83,253.22) .. (283.85,253.03) ;  
    \draw  [draw opacity=0][dash pattern={on 0.84pt off 2.51pt}] (110.63,253.01) .. controls (110.03,258.51) and (102.91,262.86) .. (94.23,262.85) .. controls (85.15,262.84) and (77.8,258.08) .. (77.81,252.22) .. controls (77.81,251.79) and (77.85,251.36) .. (77.93,250.94) -- (94.24,252.24) -- cycle ; \draw  [dash pattern={on 0.84pt off 2.51pt}] (110.63,253.01) .. controls (110.03,258.51) and (102.91,262.86) .. (94.23,262.85) .. controls (85.15,262.84) and (77.8,258.08) .. (77.81,252.22) .. controls (77.81,251.79) and (77.85,251.36) .. (77.93,250.94) ;  
    \draw  [dash pattern={on 0.84pt off 2.51pt}]  (283.82,108.61) -- (283.85,253.03) ;
    \draw  [draw opacity=0][dash pattern={on 0.84pt off 2.51pt}] (259.71,109.6) .. controls (259.7,109.48) and (259.7,109.36) .. (259.7,109.25) .. controls (259.71,103.25) and (265.12,98.4) .. (271.78,98.41) .. controls (278.2,98.42) and (283.44,102.93) .. (283.82,108.61) -- (271.77,109.26) -- cycle ; \draw  [dash pattern={on 0.84pt off 2.51pt}] (259.71,109.6) .. controls (259.7,109.48) and (259.7,109.36) .. (259.7,109.25) .. controls (259.71,103.25) and (265.12,98.4) .. (271.78,98.41) .. controls (278.2,98.42) and (283.44,102.93) .. (283.82,108.61) ;  
    
    \draw (168,53) node [anchor=north west][inner sep=0.75pt]   [align=left] {$Q_n$};
    \draw (324,144) node [anchor=north west][inner sep=0.75pt]   [align=left] {$Q_{n + 1}$};
    \draw (391,188) node [anchor=north west][inner sep=0.75pt]   [align=left] {$Q_{n + 2}$};
    \draw (55,161) node [anchor=north west][inner sep=0.75pt]  [font=\footnotesize] [align=left] {$8c_Ml_n$};
    \draw (240,211) node [anchor=north west][inner sep=0.75pt]  [font=\footnotesize] [align=left] {$8c_Ml_{n + 1}$};
    \draw (338,239) node [anchor=north west][inner sep=0.75pt]  [font=\footnotesize] [align=left] {$8c_Ml_{n + 2}$};
    \draw (178,267) node [anchor=north west][inner sep=0.75pt]  [font=\footnotesize] [align=left] {$w_n$};
    \draw (330,267) node [anchor=north west][inner sep=0.75pt]  [font=\footnotesize] [align=left] {$w_{n + 1}$};
    \draw (401,267) node [anchor=north west][inner sep=0.75pt]  [font=\footnotesize] [align=left] {$w_{n + 2}$};
    \draw (68,233.97) node [anchor=north west][inner sep=0.75pt]  [font=\scriptsize] [align=left] {{\scriptsize $R_{m_1}$}};
    \draw (55,212.97) node [anchor=north west][inner sep=0.75pt]  [font=\scriptsize] [align=left] {{\scriptsize $R_{m_1 + 1}$}};
    \draw (55,111.97) node [anchor=north west][inner sep=0.75pt]  [font=\scriptsize] [align=left] {{\scriptsize $R_{m_2 - 1}$}};
    \draw (178.12,107.97) node [anchor=north west][inner sep=0.75pt]  [font=\scriptsize] [align=left] {{\scriptsize $R_{m_2 + s_1}$}};
    \draw (178.18,120.62) node [anchor=north west][inner sep=0.75pt]  [font=\scriptsize] [align=left] {{\scriptsize $R_{m_2 + s_1 + 1}$}};
    \draw (129,92) node [anchor=north west][inner sep=0.75pt]   [align=left] {{\tiny $\cdots$}};
    
    \end{tikzpicture}
    \caption{The folding domain}
\end{figure}

    \medskip
    
\begin{figure}[h]
        \centering    
        \begin{tikzpicture}[x=0.75pt,y=0.75pt,yscale=-1,xscale=1]
        
        \draw  [dash pattern={on 0.84pt off 2.51pt}]  (51,224.4) -- (345.23,224.4) ;
        \draw  [dash pattern={on 0.84pt off 2.51pt}]  (58.72,100.47) -- (58.72,224.4) ;
        \draw  [dash pattern={on 0.84pt off 2.51pt}]  (329.77,103) -- (329.77,224.4) ;
        \draw  [dash pattern={on 0.84pt off 2.51pt}]  (58.72,224.4) -- (58.72,240.42) ;
        \draw  [dash pattern={on 0.84pt off 2.51pt}]  (329.77,224.4) -- (329.77,240.42) ;
        \draw  [dash pattern={on 0.84pt off 2.51pt}]  (214,233.67) -- (326.77,233.67) ;
        \draw [shift={(328.77,233.67)}, rotate = 180] [color={rgb, 255:red, 0; green, 0; blue, 0 }  ][line width=0.75]    (10.93,-3.29) .. controls (6.95,-1.4) and (3.31,-0.3) .. (0,0) .. controls (3.31,0.3) and (6.95,1.4) .. (10.93,3.29)   ;
        \draw    (188.13,216.17) -- (167.09,216.17) ;
        \draw [shift={(165.09,216.17)}, rotate = 360] [color={rgb, 255:red, 0; green, 0; blue, 0 }  ][line width=0.75]    (10.93,-3.29) .. controls (6.95,-1.4) and (3.31,-0.3) .. (0,0) .. controls (3.31,0.3) and (6.95,1.4) .. (10.93,3.29)   ;
        \draw  [dash pattern={on 0.84pt off 2.51pt}]  (165.09,208.15) -- (165.09,224.19) ;
        \draw  [dash pattern={on 0.84pt off 2.51pt}]  (218.68,208.15) -- (218.68,224.19) ;
        \draw  [dash pattern={on 0.84pt off 2.51pt}]  (190.81,100.47) -- (190.81,316.62) ;
        \draw    (194.56,216.17) -- (216.68,216.17) ;
        \draw [shift={(218.68,216.17)}, rotate = 180] [color={rgb, 255:red, 0; green, 0; blue, 0 }  ][line width=0.75]    (10.93,-3.29) .. controls (6.95,-1.4) and (3.31,-0.3) .. (0,0) .. controls (3.31,0.3) and (6.95,1.4) .. (10.93,3.29)   ;
        \draw  [dash pattern={on 0.84pt off 2.51pt}]  (169.37,233.67) -- (60.72,233.67) ;
        \draw [shift={(58.72,233.67)}, rotate = 360] [color={rgb, 255:red, 0; green, 0; blue, 0 }  ][line width=0.75]    (10.93,-3.29) .. controls (6.95,-1.4) and (3.31,-0.3) .. (0,0) .. controls (3.31,0.3) and (6.95,1.4) .. (10.93,3.29)   ;
        \draw  [draw opacity=0] (164.89,223.98) .. controls (164.84,222.85) and (164.82,221.72) .. (164.82,220.58) .. controls (164.82,172.93) and (209.76,134.23) .. (265.48,133.62) -- (266.82,220.58) -- cycle ; \draw   (164.89,223.98) .. controls (164.84,222.85) and (164.82,221.72) .. (164.82,220.58) .. controls (164.82,172.93) and (209.76,134.23) .. (265.48,133.62) ;  
        \draw  [draw opacity=0][dash pattern={on 0.84pt off 2.51pt}] (190.81,224.7) .. controls (190.81,224.45) and (190.81,224.2) .. (190.81,223.95) .. controls (190.81,187.43) and (223.92,157.79) .. (264.86,157.56) -- (265.35,223.95) -- cycle ; \draw  [dash pattern={on 0.84pt off 2.51pt}] (190.81,224.7) .. controls (190.81,224.45) and (190.81,224.2) .. (190.81,223.95) .. controls (190.81,187.43) and (223.92,157.79) .. (264.86,157.56) ;  
        \draw  [draw opacity=0] (219.02,224.47) .. controls (220.19,200.18) and (240.37,180.7) .. (265.46,179.74) -- (267.42,226.69) -- cycle ; \draw   (219.02,224.47) .. controls (220.19,200.18) and (240.37,180.7) .. (265.46,179.74) ;  
        \draw    (264.48,133.62) -- (264.9,179.71) ;
        \draw  [dash pattern={on 0.84pt off 2.51pt}]  (265.61,179.71) -- (266.1,222.73) ;
        \draw    (246.27,216.17) -- (265.38,216.17) ;
        \draw [shift={(267.38,216.17)}, rotate = 180] [color={rgb, 255:red, 0; green, 0; blue, 0 }  ][line width=0.75]    (10.93,-3.29) .. controls (6.95,-1.4) and (3.31,-0.3) .. (0,0) .. controls (3.31,0.3) and (6.95,1.4) .. (10.93,3.29)   ;
        \draw    (238.37,216.17) -- (220.33,216.17) ;
        \draw [shift={(218.33,216.17)}, rotate = 360] [color={rgb, 255:red, 0; green, 0; blue, 0 }  ][line width=0.75]    (10.93,-3.29) .. controls (6.95,-1.4) and (3.31,-0.3) .. (0,0) .. controls (3.31,0.3) and (6.95,1.4) .. (10.93,3.29)   ;
        \draw  [dash pattern={on 0.84pt off 2.51pt}]  (264.86,157.56) -- (388.35,157.35) ;
        \draw    (159,261.14) -- (225,261.14) ;
        \draw    (165.09,224.19) -- (159,261.14) ;
        \draw    (225,261.14) -- (218.68,224.19) ;
        \draw    (150.43,302.71) -- (236.43,302.71) ;
        \draw    (150.43,302.71) -- (159,261.14) ;
        \draw    (236.43,302.71) -- (225,261.14) ;
        \draw    (152.56,249.17) -- (174.68,249.17) ;
        \draw [shift={(176.68,249.17)}, rotate = 180] [fill={rgb, 255:red, 0; green, 0; blue, 0 }  ][line width=0.08]  [draw opacity=0] (12,-3) -- (0,0) -- (12,3) -- cycle    ;
        \draw    (147.56,278.17) -- (169.68,278.17) ;
        \draw [shift={(171.68,278.17)}, rotate = 180] [fill={rgb, 255:red, 0; green, 0; blue, 0 }  ][line width=0.08]  [draw opacity=0] (12,-3) -- (0,0) -- (12,3) -- cycle    ;
        \draw    (286.43,138.71) -- (286.43,173.71) ;
        \draw    (264.48,133.62) -- (286.43,138.71) ;
        \draw    (264.9,179.71) -- (286.43,173.71) ;
        \draw    (304.43,142.71) -- (304.43,169.71) ;
        \draw    (286.43,138.71) -- (304.43,142.71) ;
        \draw    (286.43,173.71) -- (304.43,169.71) ;
        \draw    (332.53,150.26) -- (332.53,162.27) ;
        \draw    (325.1,148.48) -- (332.53,150.26) ;
        \draw    (325.1,164.05) -- (332.53,162.27) ;
        \draw    (325.1,148.48) -- (325.1,164.05) ;
        \draw  [draw opacity=0] (332.76,150.36) .. controls (346.54,150.44) and (357.69,161.32) .. (357.69,174.74) .. controls (357.69,174.88) and (357.68,175.02) .. (357.68,175.16) -- (332.62,174.74) -- cycle ; \draw   (332.76,150.36) .. controls (346.54,150.44) and (357.69,161.32) .. (357.69,174.74) .. controls (357.69,174.88) and (357.68,175.02) .. (357.68,175.16) ;  
        \draw    (346.88,175.51) -- (348.07,184.43) ;
        \draw  [draw opacity=0] (332.53,162.27) .. controls (340.46,162.31) and (346.88,168.19) .. (346.88,175.44) .. controls (346.88,175.46) and (346.88,175.49) .. (346.88,175.51) -- (332.45,175.44) -- cycle ; \draw   (332.53,162.27) .. controls (340.46,162.31) and (346.88,168.19) .. (346.88,175.44) .. controls (346.88,175.46) and (346.88,175.49) .. (346.88,175.51) ;  
        \draw    (358.19,175.36) -- (346.88,175.51) ;
        \draw  [draw opacity=0][dash pattern={on 0.84pt off 2.51pt}] (333.12,156.77) .. controls (343.41,156.84) and (351.82,165.12) .. (352.57,175.62) -- (333,177.15) -- cycle ; \draw  [dash pattern={on 0.84pt off 2.51pt}] (333.12,156.77) .. controls (343.41,156.84) and (351.82,165.12) .. (352.57,175.62) ;  
        \draw  [dash pattern={on 0.84pt off 2.51pt}]  (352.52,133.57) -- (352.52,254.97) ;
        \draw    (357.07,184.43) -- (348.07,184.43) ;
        \draw    (357.68,175.16) -- (357.07,184.43) ;
        \draw    (356.07,192.93) -- (349.07,192.93) ;
        \draw    (357.07,184.43) -- (356.07,192.93) ;
        \draw    (348.07,184.43) -- (349.07,192.93) ;
        \draw    (165.09,224.19) -- (218.68,224.19) ;
        
        \draw (173.29,227.23) node [anchor=north west][inner sep=0.75pt]  [font=\scriptsize] [align=left] {{\scriptsize $2c_M a_{m_1}$}};
        \draw (172.03,199.43) node [anchor=north west][inner sep=0.75pt]  [font=\scriptsize] [align=left] {{\scriptsize $2c_M a_{m_2}$}};
        \draw (104,271.29) node [anchor=north west][inner sep=0.75pt]  [font=\footnotesize] [align=left] {$R_{m_2 - 2}$};
        \draw (111,242.29) node [anchor=north west][inner sep=0.75pt]  [font=\footnotesize] [align=left] {$R_{m_2 - 1}$};
        \draw (308,138.95) node [anchor=north west][inner sep=0.75pt]  [font=\normalsize] [align=left] {{\scriptsize $\cdots$}};
        \draw (267.43,116.25) node [anchor=north west][inner sep=0.75pt]  [font=\footnotesize] [align=left] {{\scriptsize $R_{m_2}$}};
        \draw (287.43,123.75) node [anchor=north west][inner sep=0.75pt]  [font=\footnotesize] [align=left] {{\scriptsize $R_{m_2 + 1}$}};
        \draw (316.43,131.75) node [anchor=north west][inner sep=0.75pt]  [font=\footnotesize] [align=left] {{\scriptsize $R_{m_2 + s_1 - 1}$}};
        \draw (361.43,170.75) node [anchor=north west][inner sep=0.75pt]  [font=\footnotesize] [align=left] {{\scriptsize $R_{m_2 + s_1}$}};
        \draw (359.07,187.43) node [anchor=north west][inner sep=0.75pt]  [font=\footnotesize] [align=left] {{\scriptsize $R_{m_2 + s_1 + 1}$}};
        \draw (224.03,200.43) node [anchor=north west][inner sep=0.75pt]  [font=\scriptsize] [align=left] {{\scriptsize $2c_M a_{m_2}$}};
        \end{tikzpicture}
        \caption{The twisting part, and the choice of $s_1$}
    \end{figure}
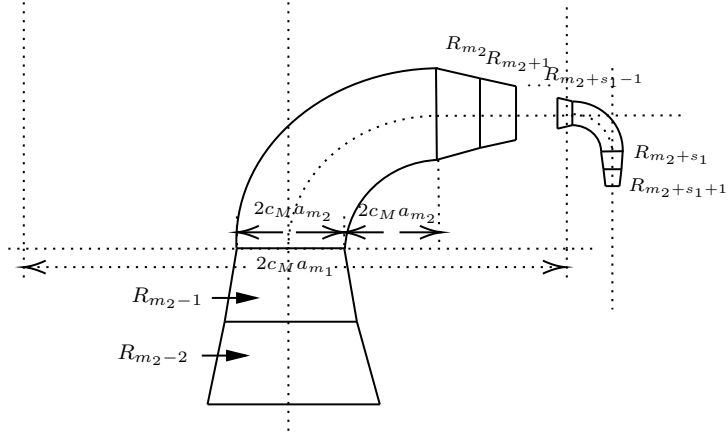
    The twisting part is a quarter of an annulus with radii $4c_M a_{m_2}$ and $2c_M a_{m_2}$
    (see the following page for the corresponding figure).
    The choice of $m_d$ ensures that every vertical ``pipe" has sufficient length,
    and the integer $s_d$ guarantees that adjacent ``pipes" do not overlap. 

\medskip

For the estimation of $w_n$, we may assume, for convenience, that $s_d = 0$ for all $d = 1, \ldots, K_n - 1$, and that $m_{K_n}$ satisfies \eqref{equ:enoughlength}.
Thus, the sequence $\{m_d\}^{K_n}_{d = 1}$ satisfies that
$$i_n + 1 = m_1 < m_2 < \ldots < m_{K_n} = i_{n + 1},$$ 
and,
for all $d\in\{1,2,\ldots, K_n - 1\}$,
\begin{equation}\label{equ:sequence}
    \frac{l_n}{2} \leq a_{m_d} + a_{m_d + 1} + \cdots + a_{m_{d + 1}} \leq 2l_n.
\end{equation}
Since the sequence $\{a_n\}$ is decreasing, and by \eqref{equ:sequence} and the construction process, we have
$$w_n \lesssim c_M\sum^{K_n}_{d = 1} a_{m_d} \lesssim a_{m_1} + \sum^{K_n - 1}_{d = 1} \frac{2l_n}{m_{d + 1} - m_d}.$$
By \eqref{equ:sequence} and the Cauchy--Schwartz inequality, we conclude that
\begin{align*}
    \frac{(l_n)^2}{4} 
    & \leq \left(\sum^{m_{d + 1} - 1}_{k = m_d}a_k\right)^2
      \leq (m_{d + 1} - m_d) \sum^{m_{d + 1} - 1}_{k = m_d}(a_k)^2
\end{align*}
Thus, by \eqref{equ:category},
$$\sum^{K_n - 1}_{d = 1} \frac{l_n}{m_{d + 1} - m_d} \lesssim \frac{1}{l_n}\sum^{i_{n + 1}}_{k = i_n + 1} (a_k)^2 \sim l_n.$$
Notice that $a_{m_1} \lesssim l_n$, this shows that $w_n\lesssim l_n$.


The newly constructed domain, denoted by $\Oz$,
is a Jordan domain since $\sum_{n\in\bN^+}w_n < \fz$.
Note that $\diam_I(\Oz) = \fz$. 
By Theorem \ref{main-3}, we can find a parametrization $\vz$
such that it does not admit a homeomorpic $W^{1,1}$ extension.
Moreover, by \eqref{equ:quasihyper:intergrability} and \cite[Lemma 3.2]{BJKXZ2024}, for some $z_0\in\Oz$,
$$\int_{\Oz} \phi(h_{\Oz}(z_0,z))\,\od z < \fz.$$
This completes the proof for Theorem \ref{main-2}.
\end{proof}

\section{Acknowledgments}
This research was supported by the Finnish centre of excellence in Random-
ness and Structures of the Academy of Finland, funding decision number: 346305.
The author would like to thank Prof. Pekka Koskela for his valuable advice and insightful suggestions on this research.


\begin{thebibliography}{99}

    \bibitem{A1995}
    S.~S.~Antman,
    Nonlinear problems of elasticity, Applied Mathematical Sciences,
    vol. 107, Springer-Verlag, New York, 1995.

    \bibitem{AIM2009}
    K. Astala, T. Iwaniec, G. Martin,
    Elliptic Partial Differential Equations and Quasiconformal Mappings in the Plane,
    Princeton University Press, 2009.

    \bibitem{B1976}
    J.~M.~Ball, 
    Convexity conditions and existence theorems in nonlinear elasticity,
    Arch. Rational Mech. Anal. 63 (1976/77), no. 4, 337-403.

    \bibitem{BJKXZ2024}
    O.~Bouchala, J.~J\"a\"askel\"ainen, P.~Koskela, H.~Xu, X.~Zhou,
    Homeomorphic Sobolev extensions of parametrizations of Jordan curves,
    J. Funct. Anal. 228 (4) (2025) 110721, 22pp.

    \bibitem{C1988}
    P.~G.~Ciarlet,
    Mathematical elasticity Vol. I. Three-dimensional elasticity,
    Studies in Mathematics and its Applications, vol. 20. North-Holland Publishing Co., Amsterdam, 1988.

    \bibitem{HK2014}
    S.~Hencl, P.~Koskela,
    Lectures on mappings of finite distortion.
    Lecture Notes in Mathematics, 2096. Springer, Cham, 2014.

    \bibitem{GH1962}
    F.~W.~Gehring, W.~K.~Hayman,
    An inequality in the theory of conformal mapping.
    J. Math. Pure. Appl. 41(9) (1962), 353-361.

    \bibitem{IM2001}
    T.~Iwaniec, G.~Martin,
    Geometric Function Theory and Non-linear Analysis,
    Oxford Mathematical Monographs, Oxford University Press, 2001.

    \bibitem{KKO2020}
    P.~Koskela, A.~Koski, J.~Onninen,
    Sobolev homeomorphic extensions onto John domains,
    J. Funct. Anal. 279 (2020), no. 10, 108719, 17 pp.

    \bibitem{KO2021}
    A.~Koski, J.~Onninen,
    Sobolev homeomorphic extensions,
    J. Eur. Math. Soc. 23 (2021), no. 12, 4065-4089.

    \bibitem{KO2023}
    A.~Koski, J.~Onninen, 
    The Sobolev Jordan--Sch\"onflies problem, 
    Adv. Math. 413 (2023) 108795.

    \bibitem{R1989}
    Yu.~G.~Reshetnyak,
    Space mappings with bounded distortion,
    American Mathematical Society, Providence, RI, 1989.

    \bibitem{V2007}
    G.~C.~Verchota,
    Harmonic homeomorphisms of the closed disc to itself need be in $W^{1,p}$, $p < 2$, but not $W^{1,2}$,
    Proc. Amer. Math. Soc. (2007), vol. 135, no. 3, 891--894.

    \bibitem{Z2019}
    Y.~R.-Y.~Zhang,
    Schoenflies solutions with conformal boundary values may fail to be Sobolev,
    Ann. Acad. Sci. Fenn. Ser. A I Math. (2019), vol. 44, no. 2, 791-796.

    \end{thebibliography}
\end{document}